\documentclass[12pt]{article}
\usepackage{amsmath}
\usepackage{amssymb}
\usepackage{amsthm}
\usepackage{graphicx}
\usepackage{pstricks}
\usepackage{lscape}

\setlength{\textheight}{21cm}
\setlength{\textwidth}{15.2cm}
\setlength{\evensidemargin}{1.1cm}
\setlength\hoffset{-1cm}
\theoremstyle{plain}
\newtheorem{theorem}{Theorem}[section]
\newtheorem{proposition}[theorem]{Proposition}
\newtheorem{lemma}[theorem]{Lemma}
\newtheorem{definition}[theorem]{Definition}
\newtheorem{corollary}[theorem]{Corollary}
\newtheorem{conjecture}[theorem]{Conjecture}
\theoremstyle{definition}
\newtheorem{example}[theorem]{Example}
\newtheorem{question}[theorem]{Question}

\newcommand{\GA}{\operatorname{GA}}

\newcommand{\F}{\mathbb{F}}

\newcommand{\lp}{\longrightarrow}
\newcommand{\mb}{\mathbb}

\newcommand{\N}{\mb{N}}

\newcommand{\Aff}{\operatorname{Aff}}
\newcommand{\TA}{\operatorname{TA}}
\newcommand{\kar}{\operatorname{char}}
\newcommand{\ME}{\operatorname{ME}}
\newcommand{\MA}{\operatorname{ME}}
\newcommand{\GL}{\operatorname{GL}}
\newcommand{\Jac}{\operatorname{Jac}}
\newcommand{\m}{\operatorname{m}}
\newcommand{\J}{\operatorname{J}}

\title{Polynomial endomorphisms over finite fields: experimental results}
\author{\begin{tabular}{ll}
Stefan Maubach & Roel Willems\footnote{Funded by Phd-grant of council for the
physical sciences, Netherlands Organisation for scientific
research (NWO).}\\
&\\
\small
Jacobs University &\small Radboud University Nijmegen\\
\small 28759 Bremen &\small Postbus 9010, 6500 GL Nijmegen  \\
\small Germany&\small The Netherlands\\
\small s.maubach@math.ru.nl& \small r.willems@math.ru.nl\\
\end{tabular}}

\begin{document}

\maketitle

\begin{abstract}{Given a finite field $\F_q$ and $n\in \N^*$, one could try to compute all polynomial endomorphisms $\F_q^n\lp \F_q^n$ up to a certain degree with a specific property. We consider the case $n=3$. If the degree is low (like 2,3, or 4) and the finite field is small ($q\leq 7$) then some of the computations are still feasible.
In this article we study the following properties of endomorphisms: being a bijection of $\F_q^n\lp \F_q^n$, being a polynomial automorphism, being a {\em Mock automorphism}, and being a locally finite polynomial automorphism. In the resulting tables, we point out a few interesting objects, and pose some interesting conjectures which surfaced through our computations.}\end{abstract}

\section{Introduction}

{\bf Notations and definitions:} Throughout this paper, $\F_q$ will be a finite field of characteristic $p$ where $q=p^r$ for some $r\in \N^*$.  
When $F_1,\ldots,F_n\in k[X_1,\ldots,X_n]$ ($k$ a field), then $F:=(F_1,\ldots,F_n)$ is a polynomial endomorphism over $k$. 
If there exists a polynomial endomorphism $G$ such that $F(G)=G(F)=(X_1,\ldots,X_n)$, then $F$ is a polynomial automorphism (which is stronger than stating that $F$ induces a bijection on $k^n$).  We will write $X$ for $X_1,\ldots,X_n$. The
polynomial automorphisms in $n$ variables over $k$ form a group, denoted $\GA_n(k)$ (compare the notation $\GL_n(k)$), while the set of polynomial endomorphisms is denoted by $\ME_n(k)$ (the monoid of endomorphisms). 
If $F\in \GA_n(k)$ such that $\deg(F_i)=1$ for all $1\leq i \leq n$, then $F$ is called affine. The affine automorphisms form a subgroup of $\GA_n(k)$ denoted by $\Aff_n(k)$. In case $F\in \GA_n(k)$ such that $F_i\in k[X_i,\ldots,X_n]$ for each $1\leq i\leq n$, then $F$ is called triangular, or Jonqui\'ere. The triangular automorphisms form a subgroup of $\GA_n(k)$, denoted by $\textup{J}_n(k)$. 
The subgroup of $\GA_n(k)$ generated by $\Aff_n(k)$ and $\textup{J}_n(k)$ is called the {\bf tame automorphism group}, denoted by $\TA_n(k)$.  By $\deg(F)$ we will denote the maximum of $\deg(F_i)$. The set of polynomial maps of degree $d$ or less we denote by $\ME^d_n(k)$, and the endomorphisms whose affine part is the identity we denote by
$\overline{\ME}_n(k)$. The following notations are now natural:  $\overline{\ME}^d_n(k):=\overline{\ME}_n(k)\cap \ME_n^d(k)$, $\GA_n^d(k):=\ME_n^d(k)\cap \GA_n(k)$,   $\overline{\GA}_n(k):=\overline{\ME}_n(k)\cap \GA_n(k)$, and 
$\overline{\GA}_n^d(k):=\overline{\GA}_n(k)\cap \GA_n^d(k)$.
If $F,G\in \ME_n(k)$, then $F$ and $G$ are called {\bf equivalent (tamely equivalent)} if there exist $N,M\in \GA_n(k)$ ($N,M\in \TA_n(k)$ such that $NFM=G$. If $F\in \ME_n(k)$ then we say that $(F,X_{n+1},\ldots,X_{n+m})\in \MA_{n+m}(k)$ is a stabilisation of $F$. We hence introduce the terms {\bf``stably equivalent'' and ``stably tamely equivalent''} meaning that a stabilisation of $F$ and $G$ are equivalent or tamely equivalent.\\

The automorphism group $\GA_n(k)$ is one of the basic objects in (affine) algebraic geometry, and the understanding of its structure a highly-sought after question. If $n=1$ then $\GA_1(k)=\Aff_1(k)$, and if $n=2$ then one has the Jung-van der Kulk theorem \cite{Jung, Kulk}, stating among others that $\GA_2(k)=\TA_2(k)$. However, in dimension 3 the structure of $\GA_3(k)$ is completely dark. 
The only strong result is in fact a {\em negative} result by Umirbaev and Shestakov \cite{SU1,SU2}, stating that if $\kar{k}=0$, then 
$\TA_3(k)\not = \GA_3(k)$. 

It might be that all types of automorphisms known in $\GA_3(k)$ have already surfaced, but the possibility exists that there are some 
strange automorphisms that have eluded common knowledge so far. But, in the case $k=\F_q$, we have an opportunity:  one could  simply check the finite set of  endomorphisms up to a certain degree $d$, i.e. $\MA_n^d(\F_q)$, and determine which ones are automorphisms. Any ``new'' type of automorphisms {\em have} to surface in this way.

Unfortunately, the computations rapidly become unfeasible if the degree $d$, the number of variables $n$, or the size of the finite field $\F_q$, are too large. We didn't find any significant shortcuts except the ones mentioned in section \ref{S2}. In the end, for us scanning through lists of $2^{30}=8^{10}$
endomorphisms was feasible, but $3^{20}=9^{10}$ was barely out of reach. 

 In the case of $k=\F_q$, another interesting class that surfaces are the (what we define as) {\bf mock automorphisms} of $\F_q$:
 endomorphisms which induce bijections of $\F_q^n$, and whose determinant of the Jacobian is a nonzero constant. Such maps are interesting for example for cryptography (being ``multivariate permutation polynomials''). 

In this article, we do the (for us) feasible computations, and analyze the resulting lists. In particular, we compute (some of) the (mock) automorphisms for $n=3$, $d\leq 3$, and $q\leq 5$.

\section{Generalities on polynomial automorphisms}
\label{S2}

The following lemma explains why we only study polynomial maps having affine part identity: 

\begin{lemma}
Let $F\in \GA_n^d(k)$. Then there exists a unique $\alpha, \beta \in \Aff_n(k)$ and $ F', F'' \in \overline{\GA}^d_n(k)$ such that  
\[ F=\alpha F' = F''\beta. \]
\end{lemma}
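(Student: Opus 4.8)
The plan is to write down both factorisations explicitly in terms of the Jacobian of $F$ at two well-chosen points, and then to obtain uniqueness from a rigidity property of $\overline{\ME}_n(k)$. For a polynomial map $P$ I write $\operatorname{aff}(P):=P(0)+\Jac(P)(0)\cdot X$ for its affine (degree $\le 1$) part, so that $\overline{\ME}_n(k)$ is exactly the set of maps with $P(0)=0$ and $\Jac(P)(0)=I_n$, and an affine map is its own $\operatorname{aff}$. The only input that is not pure bookkeeping is the fact that for $F\in\GA_n(k)$ the matrix $\Jac(F)(v)$ is invertible at \emph{every} point $v\in k^n$: if $G\in\GA_n(k)$ is the inverse of $F$, evaluating the chain-rule identity $\Jac(F)(G)\cdot\Jac(G)=\Jac(F\circ G)=I_n$ at a point $v$ gives $\Jac(F)(G(v))\cdot\Jac(G)(v)=I_n$, so the square matrix $\Jac(F)(G(v))$ is right-invertible over $k$, hence invertible, and $G(v)$ runs through all of $k^n$ since $G$ is a bijection of $k^n$. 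In particular $\Jac(F)(0)$ is invertible, so $\alpha:=\operatorname{aff}(F)\in\Aff_n(k)$.

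For existence I would take, for the left factorisation, $F':=\alpha^{-1}F$: it is an automorphism, it has degree $\le d$ because composing on the left with the affine map $\alpha^{-1}$ only forms $k$-linear combinations of the coordinates of $F$ (up to constants), and a short computation with $\alpha^{-1}(Y)=A^{-1}(Y-F(0))$, $A:=\Jac(F)(0)$, gives $F'(0)=0$ and $\Jac(F')(0)=I_n$; hence $F'\in\overline{\GA}_n^{d}(k)$ and $\alpha F'=F$. For the right factorisation one must not use $F\alpha^{-1}$, which fails to lie in $\overline{\ME}_n(k)$ as soon as $F(0)\neq 0$; instead I would recentre $F$ at the preimage of the origin. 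Put $w:=F^{-1}(0)$, $B:=\Jac(F)(w)\in\GL_n(k)$, $\beta(X):=B(X-w)\in\Aff_n(k)$ and $F'':=F\beta^{-1}$. Then $F''$ is an automorphism of degree $\le d$ (composing on the right with $\beta^{-1}$ substitutes degree $\le 1$ polynomials into the $F_i$ and cannot raise degrees), and since $\beta^{-1}(0)=w$ one gets $F''(0)=F(w)=0$ and $\Jac(F'')(0)=\Jac(F)(w)\cdot\Jac(\beta^{-1})(0)=B B^{-1}=I_n$; hence $F''\in\overline{\GA}_n^{d}(k)$ and $F''\beta=F$.

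For uniqueness I would isolate the following rigidity statement: if $G_1,G_2\in\overline{\GA}_n(k)$ and $\gamma\in\Aff_n(k)$ satisfy $G_1=\gamma G_2$ or $G_1=G_2\gamma$, then $\gamma=X$. Indeed, evaluating at $0$ and using $G_1(0)=G_2(0)=0$ (and injectivity of $G_2$ in the second case) forces $\gamma(0)=0$, so $\gamma$ is linear; taking Jacobians at $0$ and using $\Jac(G_i)(0)=I_n$ forces the linear part of $\gamma$ to be $I_n$. Two left factorisations $\alpha_1F_1'=\alpha_2F_2'$ then rewrite as $F_2'=(\alpha_2^{-1}\alpha_1)F_1'$, so $\alpha_1=\alpha_2$ and hence $F_1'=F_2'$; two right factorisations $F_1''\beta_1=F_2''\beta_2$ rewrite as $F_1''=F_2''(\beta_2\beta_1^{-1})$, so $\beta_1=\beta_2$ and hence $F_1''=F_2''$. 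I do not expect a genuine obstacle here: everything after the Jacobian-invertibility fact is manipulation of affine parts. The one point to stay alert to is the right-hand factor — the correct affine map is $F$ recentred at $F^{-1}(0)$, so $\alpha$ and $\beta$ need not be equal unless $F(0)=0$.
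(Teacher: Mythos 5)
Your proof is correct, and it differs from the paper's in two respects worth noting. For the left factorisation you do exactly what the paper does ($\alpha$ = affine part, $F'=\alpha^{-1}F$). For the right factorisation the paper simply applies the left factorisation to $F^{-1}$, writing $F^{-1}=\gamma G$ and then $F=G^{-1}\gamma^{-1}$, whereas you construct $\beta$ directly by recentring $F$ at $w=F^{-1}(0)$; a short computation shows the two answers coincide (the paper's $\gamma$ is $Y\mapsto w+\Jac(F)(w)^{-1}Y$, so $\gamma^{-1}$ is your $\beta$), so this is the same decomposition reached by a more explicit, computational path --- the paper's reduction is slicker but leaves the verification that $G^{-1}\in\overline{\GA}{}^d_n(k)$ to the reader, which your version makes transparent since $F''=F\beta^{-1}$ visibly has the same degree as $F$. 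The more substantive difference is that the paper's proof establishes only existence and says nothing about uniqueness, while your rigidity lemma (an affine map sandwiched between elements of $\overline{\GA}_n(k)$ on either side must be the identity, read off from the value and Jacobian at $0$, using injectivity of $G_2$ in the right-handed case) supplies a clean proof of the uniqueness claim actually made in the statement; that is a genuine addition rather than a redundancy.
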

\begin{proof}
For the first equality, take $\alpha$ to be the affine part of $F$, and define $F':=\alpha^{-1}F$.  
For the second, do the first equality for $F^{-1}$, i.e. $F^{-1}=\gamma G$ for some $\gamma\in \Aff_n(k), G\in \overline{\GA}_n(k)$. 
Then $F=G^{-1}\gamma^{-1}$ i.e. take $\beta=\gamma^{-1}$, $F'':=G^{-1}$. The fact that $F''\in \GA_n^d(k)$ is easy to check by comparing the highest degrees of $F''$ and $F$. 
\end{proof}

A useful criterion is that if $F$ is invertible, then $\det(Jac(F))\in k^*$. The converse is a notorious problem in characteristic zero:\\

\noindent
{\bf Jacobian Conjecture:} (Short JC) If $\kar(k)=0$, $F\in \ME_n(k)$, and $\det(\Jac(F))\in k^*$, then $F$ is an automorphism.\\

The JC in $\kar(k)=p$ is not true in general, as already in one variable, $F(X_1):=X_1-X_1^p$ has Jacobian $1-pX_1^{p-1}=1$, but 
$F(0)=F(1)$ and so $F$ is not a bijection. However, the following two (well-known) lemma's show that the Jacobian conjecture is true for the special case where $degree(F)=2$ and $
char(k)\geq 3$.

\begin{lemma}
 Let $F:k^n\rightarrow k^n$ be a polynomial endomorphism of degree $2$ with $det(Jac(F))$ nowhere zero.
Assume that $\kar(k)=p\not= 2$, then $F$ is injective. In particular, if $k$ is a finite field, then $F$ is bijective.
\end{lemma}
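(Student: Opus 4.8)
The plan is to prove that $F$ is injective; the ``in particular'' clause is then immediate, since an injective map from the finite set $k^n$ to itself is automatically surjective.

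For injectivity I would use a polynomial analogue of the mean value theorem, valid for maps of degree $\le 2$. Fix $x,y\in k^n$ and restrict $F$ to the line through them: set $\phi(t):=F\big(y+t(x-y)\big)$, a vector of polynomials in the single variable $t$, each of degree at most $2$. Writing $\phi(t)=A+Bt+Ct^2$ with $A,B,C\in k^n$ one has $\phi(1)-\phi(0)=B+C$ and $\phi'(t)=B+2Ct$; since $p\ne 2$ we may evaluate at $t=\tfrac12$ to get $\phi'(\tfrac12)=B+C=\phi(1)-\phi(0)$. On the other hand the chain rule gives $\phi'(t)=\Jac(F)\big(y+t(x-y)\big)\cdot(x-y)$. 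Combining these,
\[ F(x)-F(y)=\phi(1)-\phi(0)=\Jac(F)\!\left(\tfrac{x+y}{2}\right)\cdot(x-y). \]
This is really just the scalar identity $f(x)-f(y)=(x-y)\,f'\!\left(\tfrac{x+y}{2}\right)$ for a quadratic $f$, applied coordinatewise along the segment, and the only place characteristic $2$ must be excluded is in the appearance of $\tfrac12$.

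Now suppose $F(x)=F(y)$. By hypothesis $\det\big(\Jac(F)(a)\big)\ne 0$ for every $a\in k^n$, in particular for $a=\tfrac{x+y}{2}$, so $\Jac(F)\!\left(\tfrac{x+y}{2}\right)\in \GL_n(k)$; applying its inverse to the displayed identity forces $x-y=0$. Hence $F$ is injective. Observe that we use the non-vanishing of the Jacobian determinant only at the single point $\tfrac{x+y}{2}\in k^n$, so there is no difficulty when $k$ is finite (where $\det\Jac(F)$ need not be a constant polynomial).

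I do not foresee a serious obstacle: the argument is very short once one notices the ``degree-$2$ mean value'' identity. The one point deserving care is the verification of that identity — making sure it is an honest polynomial identity, so that it holds for all $x,y\in k^n$ and not merely generically, and checking that $p\ne 2$ enters exactly once, through division by $2$; everything else is valid over an arbitrary field.
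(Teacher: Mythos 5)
Your proof is correct and is essentially the paper's argument: both rest on differentiating $F$ along the line through the two points and evaluating at $t=\tfrac12$ to get the degree-2 mean-value identity, then invoking invertibility of $\Jac(F)$ at the midpoint. The only cosmetic difference is that the paper first normalizes $a=0=F(a)$ and works with the homogeneous decomposition $F_0+tF_1+t^2F_2$, whereas you keep the identity in its symmetric form $F(x)-F(y)=\Jac(F)\!\left(\tfrac{x+y}{2}\right)(x-y)$.
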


\begin{proof}
Suppose $F$ is not injective, then there exist $a,b\in k^n$ such that $F(a)=F(b)$. We may assume that $a=0=F(a)$, as we may replace $F$ by $F(a-X)-F(a)$ if necessary. Now consider $F(tX)=F_0+tF_1(X)+t^2F_2(X)$, where $F_i(X)$ is the homogeneous part of $F(X)$ of degree $i$ and $t$ is a new variable.
Then on the one hand $\frac{d}{dt}F(tX) = F_1(X)+2tF_2(X)$, on the other hand $\frac{d}{dt}F(tX)=Jac(F)\arrowvert_{tX}X$.
Now if we substitute $t=\frac{1}{2}$, then on the one hand we get $\frac{d}{dt}F(tX)\arrowvert_{t=\frac{1}{2}}= F_1(X)+F_2(X)=F(X)$, (by assumption $F(0)=0$, so $F_0=0$). And on the other hand we get $\frac{d}{dt}F(tX)\arrowvert_{t=\frac{1}{2}}=Jac(F)\arrowvert_{\frac{1}{2}X}X$.
But this means that $0=F(a)=Jac(F)\arrowvert_{\frac{1}{2}a}a$ but since $det(Jac(F)\arrowvert_{\frac{1}{2}a})\not=0$ it follows that $Jac(F)\arrowvert_{\frac{1}{2}a}$ is invertible, and this implies that $a=0$, which contradicts our assumption that $a\not=0$.
So $F$ is injective. If $k$ is a finite field then $k^n$ is a finite set, so injective implies bijective.
\end{proof}

\begin{corollary}\label{Jac1aut}
  Let $F:k^n\rightarrow k^n$ be a polynomial endomorphism of degree $2$ with $det(Jac(F))=1$.
Assume that $char(k)\not= 2$, then $F$ is an automorphism.
\end{corollary}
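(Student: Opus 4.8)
The plan is to derive the Corollary by combining the preceding Lemma with the classical principle that an \emph{injective} polynomial endomorphism whose Jacobian determinant is an everywhere nonzero constant is automatically a polynomial automorphism. The Lemma supplies the injectivity, while the hypothesis $\det(\Jac(F))=1$ supplies the \'etaleness that turns a mere set-theoretic bijection into an honest polynomial inverse; this second ingredient is essential, since over a finite field a polynomial map can be bijective without lying in $\GA_n$ (for instance $X_1\mapsto X_1^{q}$ over $\F_q$).

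Concretely, I would first pass to the algebraic closure $\ol{k}$. Since $\kar(\ol{k})=\kar(k)\neq 2$ and both $\deg F$ and the polynomial identity $\det(\Jac(F))=1$ survive this base change, the preceding Lemma applies to $F_{\ol{k}}\colon\ol{k}^{\,n}\to\ol{k}^{\,n}$ and shows it is injective (if $\deg F\le 1$ then $F$ is affine with invertible linear part and there is nothing to prove, so we may assume degree exactly $2$). Moreover $\det(\Jac(F_{\ol{k}}))\in\ol{k}^{*}$, so $F_{\ol{k}}$ is \'etale, and an injective \'etale endomorphism of $\ol{k}^{\,n}$ is an open immersion onto its image (Zariski's Main Theorem). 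Finally, by Ax's theorem --- valid in every characteristic --- an injective endomorphism of a variety over an algebraically closed field is surjective, so that image is all of $\ol{k}^{\,n}$ and $F_{\ol{k}}$ is an isomorphism; that is, there is $\wt{G}\in\GA_n(\ol{k})$ with $F_{\ol{k}}\wt{G}=\wt{G}F_{\ol{k}}=X$.

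It remains to descend $\wt{G}$ to $k$. After composing $F$ with a translation --- which changes neither the degree, nor the Jacobian determinant, nor whether $F$ is an automorphism --- we may assume $F(0)=0$; since $\det(\Jac(F)|_{0})=1$, $F$ admits a \emph{formal} inverse $G\in(k[[X]])^{n}$ whose homogeneous components are computed recursively by polynomial expressions in the coefficients of $F$, and therefore lie in $k$. A formal inverse is unique, so $G=\wt{G}$; in particular $G$ is a polynomial with coefficients in $k$, and $F\in\GA_n(k)$.

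The one genuinely non-formal ingredient --- and the step I would treat most carefully --- is the passage from ``$F_{\ol k}$ injective with constant nonzero Jacobian'' to ``$F_{\ol k}$ is a polynomial automorphism'' in characteristic $p$: here it is crucial that $\det(\Jac(F))\in k^{*}$, which makes $F_{\ol{k}}$ \'etale and in particular separable, so that inseparability cannot sabotage the ``open immersion'' step. This is exactly why the proof cannot be shortcut by merely observing that $F$ permutes the finite set $\F_q^{\,n}$: bijectivity on $\F_q^{\,n}$ is genuinely weaker than membership in $\GA_n(\F_q)$.
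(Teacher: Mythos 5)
Your proof is correct, and it follows the paper's broad strategy---base-change to the algebraic closure, prove the map is an isomorphism there, then descend to $k$---but the middle step is carried out by genuinely different means. The paper deduces bijectivity of $F$ over $K=\ol{k}$ by invoking the preceding Lemma's ``finite field'' clause on every finite extension $L$ of $k$ and writing $K$ as the union of these, and then asserts that a bijective polynomial self-map of $K^n$ over an algebraically closed $K$ is an automorphism. That route implicitly assumes $k$ is a finite field (otherwise the finite extensions $L$ are not finite sets and $K$ is not a union of finite fields), and the final assertion is false in characteristic $p$ without further hypotheses (Frobenius is a bijection of $\ol{\F_p}$ that is not in $\GA_1$); it is rescued precisely by the \'etaleness that you make explicit. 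Your version---injectivity over $\ol{k}$ directly from the Lemma, \'etaleness from $\det(\Jac(F))\in k^{*}$, surjectivity from Ax--Grothendieck, and ``injective \'etale $\Rightarrow$ open immersion''---repairs both points and works for an arbitrary field of characteristic $\neq 2$, including characteristic $0$, where it recovers the degree-$2$ case of the Jacobian Conjecture. For the descent, the paper cites Lemma 1.1.8 of van den Essen's book, whereas your recursive formal-inverse argument is essentially a proof of that lemma, so nothing is lost there. The price of your added generality is the reliance on heavier machinery (Ax--Grothendieck, Zariski's Main Theorem) where the paper, for its intended finite-field application, needs only counting.
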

\begin{proof}
 Let $K$ be the algebraic closure of $k$. And consider $F$ as a polynomial endomorphism of $K^n$. For every finite extension $L$ of $k$, we have $F:L^n\rightarrow L^n$ is a bijection, by the above lemma. Hence, $F:K^n\rightarrow K^n$ is a bijection (as $K$ is the infinite union of all finite extensions of $k$). But $K$ is algebraically closed so a bijection of $K^n$ is a polynomial automorphism, so it has an inverse $F^{-1}$. Now Lemma 1.1.8 in \cite{EFC} states that $F^{-1}$ has coefficients in $k$, which means that $F^{-1}$ is defined over $k$, which means that $F$ is a polynomial automorphism over $k$.
\end{proof}

One remark on the previous result about the difference between $det(Jac(F))=1$ and $det(Jac(F))$ is nowhere zero.
If $det(Jac(F))$ is nowhere zero over $k$ this does not imply that $det(Jac(F))$ over $K$ is nowhere zero, consider the following example (see the warning after Corollary 1.1.35 in \cite{EFC}):

\begin{example}
Let $F=(x,y+axz,z+bxy)\in k[x,y,z]$ with $k$ a finite field of characteristic $p$ and $a,b\not= 0\ \in k$, such that $ab$ is not a square. Then $det(Jac(F))=1-abx^2$ is nowhere zero, but obviously $F$ not invertible.
\end{example}

The following result is on subsets of groups that are invariant under a subgroup.

\begin{lemma}\label{FS}
 Let $G$ be a group, $H$ a finite subgroup of $G$ and $V$ a finite subset of $G$ such that $HV\subseteq V$, then $\#H|\#V$.
\end{lemma}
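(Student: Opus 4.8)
The plan is to upgrade the hypothesis $HV\subseteq V$ to the \emph{equality} $HV=V$, and then observe that $V$ is thereby a union of right cosets of $H$, each of size $\#H$, which yields the divisibility by the usual Lagrange-style count.

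First I would fix $h\in H$ and consider left multiplication $\ell_h\colon G\to G$, $g\mapsto hg$. This is a bijection of $G$ (with inverse $\ell_{h^{-1}}$), and by hypothesis it maps the finite set $V$ into itself; an injective self-map of a finite set is a bijection, so $\ell_h$ restricts to a bijection $V\to V$, i.e.\ $hV=V$. As $h\in H$ was arbitrary, $HV=\bigcup_{h\in H}hV=V$.

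Next, for each $g\in V$ the right coset $Hg=\{hg:h\in H\}$ is contained in $HV=V$, so $V=\bigcup_{g\in V}Hg$ is a union of right cosets of $H$. Distinct right cosets of $H$ are disjoint, and each coset $Hg$ has exactly $\#H$ elements, since $h\mapsto hg$ is injective on $H$. Choosing one representative per distinct coset partitions $V$ into $m$ blocks of size $\#H$, so $\#V=m\cdot\#H$ and hence $\#H\mid\#V$.

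There is no serious obstacle here; the one point to be careful about is that finiteness of $V$ (equivalently, of $H$) is genuinely used, precisely at the step where $hV\subseteq V$ is promoted to $hV=V$. After that, the argument is just the orbit/coset bookkeeping underlying Lagrange's theorem, applied to the left-translation action of $H$ on $V$ rather than on $G$ itself.
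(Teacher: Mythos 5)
Your proof is correct and takes essentially the same approach as the paper's, which also decomposes $V$ into the disjoint orbits (right cosets) $Hv$, each in bijection with $H$. One small remark: the upgrade $HV=V$ already follows from $e\in H$ (since $V=eV\subseteq HV$), so the injective-self-map-of-a-finite-set step is not where finiteness is really needed; finiteness of $H$ and $V$ only enters in making the counting $\#V=m\cdot\#H$ meaningful.
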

\begin{proof}
Since $G$ acts transitively on $G$, $H$ acts transitively on $V$. Thus, for every $v\in V$, $Hv$ is an orbit set-isomorphic to $H$. 
Also,  $HV=V$ consists of disjoint orbits of the form $Hv$, so $\# H | \# V$.
\end{proof}

In this article we also consider so-called {\em locally finite polynomial automorphisms}. A motivation for studying these automorphisms is that they might generate the automorphism group in a natural way (see \cite{Fu-Mau} for a more elaborate motivation of studying these maps). The reason that we make computations and classifications on them in this article is to have some examples on hand to work with in the future, as there can be rather complicated locally finite polynomial automorphisms. 

\begin{definition} Let $F\in \MA_n(k)$. Then $F$ is called locally finite (short LFPE) if $\deg(F^n)$ is bounded, or equivalently, there exists $n\in \N$ and $a_i\in k$ such that $F^n+a_{n-1}F^{n-1}+\ldots + a_1F+a_0 I=0$. We say that $T^n+a_{n-1}T^{n-1}+\ldots + a_1T+a_0$ is a vanishing polynomial for $F$. In \cite{Fu-Mau} theorem 1.1 it is proven that these vanishing polynomials form an ideal of $k[T]$, and that there exists a minimum polynomial $\m_F(T)$.
\end{definition}

When trying to classify LFPEs and their minimal polynomials (i.e using computer calculations) one can use the following lemmas to reduce computations:

\begin{lemma}\label{LFPEconjugacy}
Let $F\in \GA_n(k)$  and $L \in GL_n(k)$ then $F$ is locally finite iff $L^{-1}FL$ is locally finite. Furthermore if $\m(T)\in k[T]$ is the minimum polynomial for $F$, then $\m(T)$ is also the minimum polynomial of $L^{-1}FL$.
\end{lemma}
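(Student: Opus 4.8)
The plan is to exploit the fact that conjugation by an element of $\GL_n(k)$ commutes with taking powers and does not change degree. First I would observe that for any $m\in\N$ we have $(L^{-1}FL)^m = L^{-1}F^mL$, which is immediate by telescoping the product. Since $L$ and $L^{-1}$ are linear (degree one) automorphisms, composing with them on either side does not alter the degree of a polynomial map: $\deg(L^{-1}F^mL)=\deg(F^m)$. Hence $\{\deg((L^{-1}FL)^m)\}_m$ is bounded if and only if $\{\deg(F^m)\}_m$ is bounded, which gives the first assertion that $F$ is locally finite iff $L^{-1}FL$ is.

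For the statement about minimal polynomials, the key point is that the map $G\mapsto L^{-1}GL$ is a ring homomorphism (in fact an algebra automorphism) from $\ME_n(k)$ to itself with respect to composition as multiplication and the natural $k$-algebra structure coming from the module/monoid-algebra picture used in \cite{Fu-Mau}. Concretely, for a polynomial $g(T)=\sum_i a_iT^i\in k[T]$, I would check that $g$ applied to $L^{-1}FL$ equals $L^{-1}\,g(F)\,L$; this follows from $(L^{-1}FL)^i = L^{-1}F^iL$ together with the fact that $L^{-1}(a_i F^i)L = a_i L^{-1}F^iL$ and that the relevant sum is preserved under conjugation. Consequently $g(T)$ is a vanishing polynomial for $L^{-1}FL$ precisely when $L^{-1}g(F)L=0$, i.e. precisely when $g(F)=0$, i.e. precisely when $g(T)$ is a vanishing polynomial for $F$. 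Thus the two ideals of vanishing polynomials coincide, and therefore so do their monic generators $\m_F(T)$ and $\m_{L^{-1}FL}(T)$.

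The only mildly delicate point — and the one I would be most careful about — is justifying that the expression $F^n+a_{n-1}F^{n-1}+\cdots+a_1F+a_0 I=0$ behaves like genuine polynomial evaluation, i.e. that ``addition'' here is addition of polynomial maps coordinatewise and that conjugation distributes over it. Once one is in the setting of \cite{Fu-Mau} theorem 1.1, where these vanishing polynomials are already shown to form an ideal of $k[T]$, this is automatic: conjugation by $L$ is an algebra automorphism of the ambient structure, so it maps the vanishing ideal of $F$ isomorphically onto the vanishing ideal of $L^{-1}FL$, and an algebra automorphism fixing $k[T]$-coefficients sends $\m_F$ to $\m_{L^{-1}FL}$. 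I expect no real obstacle here; the proof is essentially bookkeeping around the identity $(L^{-1}FL)^m=L^{-1}F^mL$ and the degree-invariance of pre- and post-composition with linear maps.
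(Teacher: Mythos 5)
Your proof is correct and follows essentially the same route as the paper: both rest on the identity $(L^{-1}FL)^i=L^{-1}F^iL$ and the fact that conjugation by the linear map $L$ distributes over the $k$-linear combination, so that $\m(L^{-1}FL)=L^{-1}\m(F)L$. If anything you are slightly more complete than the paper, since you also note that the two ideals of vanishing polynomials coincide (giving equality of the \emph{minimum} polynomials, not just that $\m_F$ vanishes on $L^{-1}FL$) and you handle the converse direction explicitly via degree-invariance rather than leaving it to symmetry.
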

\begin{proof}
Suppose $F$ is locally finite with minimum polynomial $\m(T)=m_0+m_1T+\cdots +m_dT^d$, so $m_0I+m_1F+m_2F^2+\cdots + m_dF^d=0$, where $I$ is the identity mam, and $F^d$ is the commosition of $d$ $F$'s.
Now $0=L^{-1}(m_0I+m_1F+m_2F^2+\cdots + m_dF^d)L=m_0L^{-1}IL+m_1L^{-1}FL+m_2L^{-1}F^2L+\cdots + m_dL^{-1}F^dL=
m_0I+m_1L^{-1}FL+m_2(L^{-1}FL)^2+\cdots + m_d(L^{-1}FL)^d=\m(L^{-1}FL)$.
This shows that if $F$ is locally finite with minimum polynomial $\m(T)$, then so is $L^{-1}FL$.
\end{proof}

When classifying LFPEs, one cannot simply restrict to $\overline{\GA}_n(k)$, as it is very well possible that $F\in \overline{\GA}_n(k)$
is not an LFPE, but $\alpha F$ is where $\alpha\in \Aff_n(k)$. However, we can restrict to classes of affine parts under linear maps, by the following lemma:

\begin{lemma}\label{CompLocFin}
Let $\alpha\in Aff_n(k)$ and $F\in \overline{GA}^d_n(k)$. Suppose that $\beta=L^{-1}\alpha L$ , where $L\in GL_n(k)$, and that $\beta F$ is locally finite. Now there exists an automorphism $G\in \overline{GA}^d_n(k)$, such that $\beta F=L^{-1}\alpha G L$, i.e. $\beta F$ is in the conjugacy class of $\alpha G$. Furthermore, the minimum polynomials of $F$ and $G$ are the same. 
\end{lemma}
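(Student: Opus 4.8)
The plan is to exhibit $G$ explicitly as the linear conjugate of $F$, namely to set $G := LFL^{-1}$, and then to check the three defining properties of $\overline{GA}^d_n(k)$ together with the stated relation to $\beta F$. The relation is immediate from the definition of $\beta$: one computes
\[ L^{-1}\alpha G L = L^{-1}\alpha (LFL^{-1}) L = (L^{-1}\alpha L) F = \beta F, \]
so the only real content is that this particular $G$ lies in $\overline{GA}^d_n(k)$.

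That verification splits into three routine checks. First, $G = L\circ F\circ L^{-1}$ is a composite of polynomial automorphisms (since $L\in GL_n(k)\subseteq GA_n(k)$ and $F\in GA_n(k)$), hence $G\in GA_n(k)$. Second, $\deg G\leq d$: conjugation by an invertible linear map cannot raise the degree, because, writing $L(X)=AX$, each component of $G(X)=A\,F(A^{-1}X)$ is a $k$-linear combination of the polynomials $F_j(A^{-1}X)$, each of degree $\leq\deg F_j\leq d$. Third --- and this is the point that genuinely uses that $L$ is \emph{linear} rather than merely affine --- the affine part of $G$ is the identity: writing $F=X+H$ where each component of $H$ has order $\geq 2$ (no constant or linear term), we get $G(X)=L\bigl(L^{-1}X+H(L^{-1}X)\bigr)=X+(L\circ H\circ L^{-1})(X)$, and since $L$ and $L^{-1}$ are homogeneous of degree $1$, every component of $L\circ H\circ L^{-1}$ still has order $\geq 2$; thus the degree-$\leq 1$ part of $G$ is exactly $X$. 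Hence $G\in\overline{GA}^d_n(k)$.

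For the last assertion, observe that $G=LFL^{-1}$ is precisely the conjugate of $F$ by $L^{-1}\in GL_n(k)$, so Lemma \ref{LFPEconjugacy} applies directly: $F$ is locally finite if and only if $G$ is, and in that case $\m_F=\m_G$. (Equivalently, applying Lemma \ref{LFPEconjugacy} to the identity $\beta F=L^{-1}(\alpha G)L$ shows that $\beta F$ and $\alpha G$ are simultaneously locally finite with the same minimum polynomial, which is the form in which the lemma gets used to cut down the search over affine parts.)

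I anticipate no serious obstacle: the statement is essentially bookkeeping about how linear conjugation interacts with the decomposition ``affine part times part with trivial affine part''. The one place to be careful is exactly the third check above --- that the property ``affine part $=$ identity'' is preserved under conjugation by $GL_n(k)$ but would fail under conjugation by the full affine group --- together with the minor point that ``minimum polynomial'' is only defined once local finiteness is known, so the final sentence is (as it must be) conditional on the hypothesis that $\beta F$ is locally finite.
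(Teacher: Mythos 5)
Your proof is correct and follows exactly the paper's approach: the paper also takes $G = LFL^{-1}$ and verifies $L^{-1}\alpha GL = \beta F$ by the same one-line computation. The additional checks you supply (that $G$ stays in $\overline{\GA}^d_n(k)$ and the appeal to Lemma \ref{LFPEconjugacy} for the minimum polynomial) are details the paper leaves implicit, and you handle them correctly.
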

\begin{proof}
Just take $G=LFL^{-1}$, then $L^{-1}\alpha GL=L^{-1}\alpha LFL^{-1}L=L^{-1}\alpha LF=\beta F$.
\end{proof}

So in order to classify the locally finite automorphisms (up to some degree $d$), it suffices to compute the conjugacy classes of $\Aff_n(k)$ under conjugacy by $\GL_n(k)$, and compose a representative of each class with all the elements of $\GA_n^d(k)$.

When considering LFPEs over finite fields, we have the additional following lemma:

\begin{lemma} \label{order} Let $F\in \GA_n(\F_q)$ be an LFPE. Then $F$ has finite order (as element of $\GA_n(\F_q)$).
\end{lemma}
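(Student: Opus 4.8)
The plan is to exploit finiteness twice: once to bound degrees, and once to invoke a pigeonhole/counting argument. Since $F$ is an LFPE over $\F_q$, by definition the degrees $\deg(F^m)$ are uniformly bounded, say by some $d\in\N$; moreover each $F^m$ has its affine part ranging over the finite group $\Aff_n(\F_q)$, so in fact all the iterates $F^m$ lie in the \emph{finite} set $\GA_n^d(\F_q)$ (polynomial maps of degree at most $d$ have only finitely many coefficients, each drawn from the finite field $\F_q$). Hence the map $m\mapsto F^m$ from $\N$ into this finite set cannot be injective: there exist $i<j$ with $F^i=F^j$. Since $F$ is an automorphism, $F^i$ is invertible in the group $\GA_n(\F_q)$, so we may cancel it to get $F^{j-i}=\mathrm{id}$, which shows $F$ has finite order.

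The key steps, in order, are: first, extract from the LFPE hypothesis the uniform degree bound $d:=\sup_m \deg(F^m)<\infty$ (this is immediate from the definition, or from the existence of the minimal polynomial $\m_F(T)$, whose degree controls the degrees of all iterates via the recursion $F^m = -\sum_{k<\deg \m_F} a_k F^{m-\deg\m_F+k}$ for a monic vanishing polynomial). Second, observe that $\GA_n^d(\F_q)$ is a finite set, being cut out inside the finite-dimensional space of $n$-tuples of polynomials of degree $\le d$ with coefficients in the finite field $\F_q$. Third, apply the pigeonhole principle to the sequence $(F^m)_{m\ge 1}$ to produce $i<j$ with $F^i=F^j$. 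Fourth, use invertibility of $F$ (hence of $F^i$) in the group $\GA_n(\F_q)$ to cancel and conclude $F^{j-i}=\mathrm{id}$.

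I expect the only point requiring a word of care — the "main obstacle," such as it is — to be the justification that \emph{all} iterates $F^m$ have degree bounded by a single $d$, rather than just that $\deg(F^m)$ fails to grow. This is where the minimal polynomial from \cite{Fu-Mau} is genuinely used: once one has a monic relation $F^{e} = -a_{e-1}F^{e-1}-\cdots-a_0 I$, an easy induction on $m$ shows $\deg(F^m)\le \max\{\deg(F^k): 0\le k\le e-1\}=:d$ for all $m\ge 0$. Everything else — finiteness of $\GA_n^d(\F_q)$ and the pigeonhole/cancellation argument — is routine. One should also note in passing that the argument never leaves the group $\GA_n(\F_q)$, so "finite order" is meant precisely in the group-theoretic sense stated.
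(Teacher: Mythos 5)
Your proof is correct, but it takes a genuinely different route from the paper's. The paper argues through the minimal polynomial: it asserts that $\m_F(T)$ divides $T^{q^r}-T$ for some $r$, whence $F^{q^r}=F$ and one cancels to get $F^{q^r-1}=I$. You instead use only the definition of LFPE (uniform boundedness of $\deg(F^m)$, which you correctly note needs no extra work since ``bounded'' already means uniformly bounded), the finiteness of $\GA_n^d(\F_q)$, and pigeonhole plus cancellation in the group. Your approach is more elementary and, as it happens, more robust: the paper's divisibility claim is delicate, because $T^{q^r}-T$ is squarefree over $\F_q$ while $\m_F(T)$ need not be (for $F=x+1$ over $\F_p$ one gets $\m_F(T)=(T-1)^2$), so the paper's one-line argument really needs a small repair --- e.g.\ observe that $\m_F(0)\neq 0$ (else compose with $F^{-1}$ and contradict minimality), so $T$ is a unit of finite multiplicative order $N$ in $\F_q[T]/(\m_F)$, giving $\m_F(T)\mid T^N-1$ and $F^N=I$. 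Your pigeonhole argument sidesteps all of this; its only cost is that it produces no explicit bound on the order in terms of $\m_F$, which the paper's (repaired) argument does. The aside about the affine part of $F^m$ lying in $\Aff_n(\F_q)$ is harmless but unnecessary: finiteness of $\GA_n^d(\F_q)$ already follows from the degree bound and the finiteness of $\F_q$.
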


\begin{proof} If $F$ is an LFPE, then there exists a minimum polynomial $\m(T)$ generating the ideal of vanishing polynomials for $F$. 
There exists some $r\in \N$ such that $\m(T)~|~ T^{q^r}-T$, yielding the result.
\end{proof}

Another concept that surfaces, is the following:

\begin{definition} Let $F=I+H \in \overline{\MA}_n(k)$ where $H=(H_1,\ldots, H_n)$ is the non-linear part. Then $F$ is said to satisfy the {\em dependence criterion} if $(H_1,\ldots, H_n)$ are linearly dependent.
\end{definition}

Notice that $F\in \overline{\MA}_n(k)$ satisfying the dependence criterion is equivalent to being able to apply a linear conjugation to isolate one variable, i.e. $L^{-1}FL=(X_1,X_2+H_2, \ldots, X_n+H_n)$ for some linear map $L$.

\section{Computations on endomorphisms of low degree}
It is clear that  $\#\Aff_n(k)|\#\GA_n^d(k)$.
Let $F:k^n\rightarrow k^n$ be an automorphism then we can consider $\alpha$ to be the affine part of $F$, which is obviously invertible and we can then look at $G=\alpha^{-1}F$, where $G$ now has affine part the identity. This means that to compute all automorphisms it suffices to compute all automorphisms having affine part the identity and compose each of them from the left with all affine automorphisms.
This suffices for our computations over $\F_2$ and $\F_3$, but for larger finite fields ($\F_4,\F_5$ and $\F_7$) we will add additionally the dependence criterion.

Finally recall that over $\F_q$ there are $(q^n-1)(q^n-q)\cdots (q^n-q^{n-1})$ linear automorphisms, and that there are $q^n$ times as many affine automorphisms as linear.

For the rest of the article, as we stay in 3 dimensions, we will rename our variables $x,y,z$. 

\subsection{The finite field of two elements: $\mathbb{F}_2$}\label{F2}

As mentioned in the previous section to find all polynomial automorphisms it suffices to find all automorphisms having affine part equal to the identity, and  there are $(2^3-1)(2^3-2)(2^3-2^2)=168$ linear automorphisms. There are $2^3*168=1344$ affine automorphisms.

\subsubsection{Degree 2 over $\F_2$}

We remind the reader that by a {\em mock automorphism} we mean an endomorphism $E\in \MA_n(k)$ such that $E$ induces a permutation of $k^n$ and $\det(\Jac(E))\in k^*$.
Over $\F_2$, Corollary \ref{Jac1aut} does not hold so there do exist mock automorphisms which are not automorphisms in  $\MA_3^2(\F_2)$. 

\begin{theorem}\label{T1} If $F\in \MA^2_3(\F_2)$ is a mock automorphism, then  $F$ is in one of the following four classes:
\begin{itemize}
 \item[1)] The 176 tame automorphisms, equivalent to $(x,y,z)$.
 \item[2)] 48 endomorphisms tamely equivalent to $(x^4+x^2+x,y,z)$.
 \item[3)] 56 endomorphisms tamely equivalent to $(x^8+x^2+x,y,z)$.
 \item[4)] 56 endomorphisms tamely equivalent to $(x^8+x^4+x,y,z)$.
\end{itemize}
In particular, all automorphisms of this type are tame, i.e. $\GA_3^2(\F_2)=\TA_3^2(\F_2)$
Furthermore, the equivalence classes are all distinct, except possibly class (3) and (4) (see conjecture \ref{Qu1}). \\
There are in total $1344\cdot 176=236544$ automorphisms of $\F_2^3$ of degree less or equal to $2$. 
\end{theorem}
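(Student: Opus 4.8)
The statement is a classification obtained by exhaustive search, so the plan is to (i) cut the problem down to a finite list, (ii) run the search, and (iii) impose enough theory on the output that the four classes and the final count become rigorous. For step (i), observe first that by the first lemma of Section~\ref{S2}, every $F\in\ME_3^2(\F_2)$ factors \emph{uniquely} as $F=\alpha F'$ with $\alpha\in\Aff_3(\F_2)$ and $F'$ having affine part the identity; moreover $\deg F'\le 2$, and by the chain rule $\Jac(\alpha F')=A\cdot\Jac F'$ with $A$ the invertible constant linear part of $\alpha$, so $\det\Jac(\alpha F')=\det(A)\det\Jac F'$, while pre/post-composition with an affine bijection preserves bijectivity of $\F_2^3$. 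Hence $F$ is a mock automorphism iff $F'$ is, and $F$ is tamely equivalent to $F'$ since $\alpha\in\Aff_3(\F_2)\subseteq\TA_3(\F_2)$. So it suffices to classify the mock automorphisms in $\overline{\ME}_3^2(\F_2)$, which consists of the maps $(x+H_1,y+H_2,z+H_3)$ with each $H_i$ a quadratic form in $x,y,z$; there are $6$ quadratic monomials over $\F_2$, so $\#\overline{\ME}_3^2(\F_2)=2^{18}$, a list easily scanned.

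For step (ii), I would run through these $2^{18}$ endomorphisms and retain those satisfying the two defining conditions of a mock automorphism: $\det\Jac(F)=1$ in $\F_2[x,y,z]$ (one $3\times3$ determinant) and $F$ permutes the eight points of $\F_2^3$ (evaluate at $8$ points). This should leave the $176+48+56+56=336$ endomorphisms of the theorem, and the remaining work is to show they fall into exactly the four tame-equivalence classes stated. For the separation I would use the invariant $d(F):=[\,\overline{\F_2}(x,y,z):F^{*}\overline{\F_2}(x,y,z)\,]$, the degree of the associated dominant rational self-map of $\mathbb{A}^3$ over $\overline{\F_2}$ (the generic fibre cardinality when $F$ is separable). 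Composing with automorphisms does not change $d$, so $d$ is constant on tame-equivalence classes; and $d(F)=1$ iff $F$ is birational, which, for a polynomial map with $\det\Jac\in k^{*}$, forces $F$ to be an automorphism: étale plus birational gives an open immersion by Zariski's main theorem, and a codimension-one boundary component $V(g)$ would make $g\circ F$ a non-constant polynomial vanishing nowhere, absurd, so the boundary has codimension $\ge 2$ and $F^{-1}$ extends by normality. Computing $d$ on the $336$ survivors I expect the values $d=1$ on exactly the $176$ automorphisms, $d=4$ on $48$ of them, and $d=8$ on the remaining $112$; this shows classes (1), (2), and (3)$\cup$(4) are pairwise distinct and that $d$ cannot separate (3) from (4) — exactly the point left open in Conjecture~\ref{Qu1}.

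For step (iii), it remains to collapse each block to its representative. On each of the $176$ automorphisms I would run the standard tame-reduction algorithm (kill quadratic terms by elementary de Jonquières maps, clean up with linear and affine maps) and record a tame identity $N F M=(x,y,z)$; producing such a reduction for every member proves they are all tamely equivalent to $(x,y,z)$, and in particular each is a product of tame maps, giving $\GA_3^2(\F_2)=\TA_3^2(\F_2)$. Running the same procedure on the $d>1$ survivors, I expect each to reduce to a map $(L(x),y,z)$ with $L$ an additive (linearised) polynomial, the three occurring being $L=x^4+x^2+x$ (on $48$), $L=x^8+x^2+x$ (on $56$) and $L=x^8+x^4+x$ (on $56$); one checks each $L$ is separable with $\#\ker L=4$, $8$, $8$ respectively (matching $d$), that each $(L(x),y,z)$ is a mock automorphism ($\det\Jac=L'(x)=1$ in characteristic $2$, and $L$ is the identity map on $\F_2$), and that none is a polynomial automorphism, since an inverse of $(P(x),y,z)$ must be of the form $(Q(x),y,z)$ with $P\circ Q=x$, forcing $\deg P=1$. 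Finally, by uniqueness of the factorisation $F=\alpha F'$, the degree-$\le 2$ automorphisms of $\F_2^3$ are in bijection with pairs (one of the $1344$ affine automorphisms, one of the $176$ automorphisms with affine part the identity), giving $1344\cdot176=236544$.

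The main obstacle is not the raw enumeration — $2^{18}$ is tiny — but making the two soft steps airtight. First, certifying for each of the $336$ maps whether it genuinely is a polynomial automorphism rather than merely a bijection of $\F_2^3$: the $d$-invariant plus the Zariski-main-theorem argument handles the affirmative cases and the one-variable degree argument handles the negative cases, but one must verify that between them these criteria decide every case. Second, proving the tame-equivalence classification is exactly (not merely at most) four classes: this needs both the invariant $d$, to show (1), (2), and (3)$\cup$(4) are genuinely different, and an explicit tame reduction exhibited for each of the $336$ inputs, to show there are no more than four and that the listed normal forms are actually reached; getting the reduction to terminate uniformly on all inputs, and recognising that the $d>1$ normal forms are precisely these linearised polynomials, is the delicate part. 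The indistinguishability of classes (3) and (4) under every invariant available is the residue of this difficulty, which is why it is posed as Conjecture~\ref{Qu1}.
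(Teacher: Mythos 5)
Your proposal is correct in substance and follows the same overall skeleton as the paper: reduce to maps with affine part the identity (noting that the constant term of $\det\Jac F$ is the determinant of the linear part, so a mock automorphism automatically has invertible affine part), exhaustively scan the $2^{18}$ candidates, exhibit explicit tame equivalences to normal forms $(P(x),y,z)$, separate the classes by an invariant, and multiply by $\#\Aff_3(\F_2)=1344$ at the end. The one genuine difference is the separating invariant. You use the generic degree $d(F)=[\,\overline{\F_2}(x,y,z):F^{*}\overline{\F_2}(x,y,z)\,]$, which takes the values $1,4,8,8$ on the four classes, together with a Zariski-main-theorem argument to certify that $d=1$ plus constant Jacobian implies automorphism (the paper reaches the same conclusion for class (1) only via the explicit tame factorisations, and the descent from $\overline{\F_2}$ to $\F_2$ that you elide is its Corollary \ref{Jac1aut} citing Lemma 1.1.8 of \cite{EFC}). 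The paper instead separates (2) from (3) and (4) by Lemma \ref{L1}: whether $F$ remains a bijection of $\F_{2^m}^3$, which fails first at $m=3$ for class (2) and at $m=7$ for classes (3) and (4). The paper's invariant is cheaper to compute (evaluate at finitely many points) and is the one actually tabulated in the degree-3 section; yours is more conceptual, explains \emph{why} (3) and (4) resist separation (both have $d=8$), and doubles as the automorphism certificate. Two small cautions: your blanket claim that \emph{every} $F\in\ME_3^2(\F_2)$ factors as $\alpha F'$ with $\alpha\in\Aff_3(\F_2)$ is false without the mock-automorphism hypothesis (the affine part need not be invertible), though you only ever apply it to mock automorphisms where it is justified; and there is no known terminating ``standard tame-reduction algorithm'' in dimension 3, so step (iii) must be read, as the paper reads it, as a search that happens to produce a tame certificate for each of the 336 maps rather than an algorithm guaranteed to succeed.
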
 

\begin{proof} The classification is done by computer, see \cite{RoelThesis} chapter 5.
We can show how for example $(x^8+x^4+x, y,z)$ is tamely equivalent to a polynomial endomorphism of degree 2:
\[ \begin{array}{l}
(x+y^2,y+z^2,z)(x^8+x^4+x,y,z)(x,y+x^4+x^2,z+x^2)=\\
(x+y^2,y+x^2+z^2,z+x^2)
\end{array} \]
 What is left is to show that the classes (1),(2),  and (3)+(4) are different. 
Class (1) consists of automorphisms while (2),(3),(4) are not.
Using the below lemma \ref{L1}, the endomorphisms of type (2) are all bijections of $\F_{2^m}^3$ if $3\not|m$, and the endomorphisms of type (3) and (4) are all bijections of $\F_{2^m}^3$ if $7\not|m$. 
The last sentence follows since $\#\Aff_3(\F_2)=1344$.
\end{proof}

\begin{lemma}\label{L1}
$x^4+x^2+x$ is a bijection of $\F_{2^r}$ if $3\not |r$, and $x^8+x^4+x$ and $x^8+x^2+x$ are bijection of $\F_{2^r}$ if $7\not | r$.
\end{lemma}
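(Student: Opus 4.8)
The plan is to reduce the problem to a statement about the kernel of an additive polynomial. Each of the polynomials $p(x) = x^4+x^2+x$, $q(x) = x^8+x^4+x$, $r(x) = x^8+x^2+x$ is $\F_2$-linear (additive) on any extension of $\F_2$, since it is a sum of monomials $x^{2^i}$. Hence $p$ is a bijection of $\F_{2^r}$ if and only if its kernel there is trivial, i.e. if and only if $p$ has no root in $\F_{2^r}$ other than $0$; likewise for $q$ and $r$. So the task becomes: determine the field generated by the roots of each polynomial, and show that these roots lie in $\F_{2^r}$ only when the stated divisibility condition on $r$ holds.

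First I would factor each polynomial over $\F_2$. Writing $p(x) = x(x^3+x+1)$, the cubic $x^3+x+1$ is irreducible over $\F_2$, so its roots generate $\F_{2^3}$; therefore the nonzero roots of $p$ lie in $\F_{2^r}$ precisely when $\F_{2^3}\subseteq \F_{2^r}$, i.e. when $3\mid r$. Thus $p$ is a bijection of $\F_{2^r}$ iff $3\nmid r$. For $q(x) = x(x^7+x^3+1)$ and $r(x) = x(x^7+x+1)$, I would check that $x^7+x^3+1$ and $x^7+x+1$ are each irreducible over $\F_2$ (these are the two well-known irreducible trinomials of degree $7$; note $x^7+x^3+1$ is the reciprocal of $x^7+x^4+1$, and one verifies irreducibility either by checking absence of roots and of the unique irreducible factors of degree $2,3$, or by direct reference to standard tables). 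Their roots then generate $\F_{2^7}$, so the nonzero roots lie in $\F_{2^r}$ exactly when $7\mid r$, giving that $q$ and $r$ are bijections of $\F_{2^r}$ iff $7\nmid r$.

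The one point requiring genuine (if routine) verification is the irreducibility of the degree-$7$ trinomials over $\F_2$: since $7$ is prime, a degree-$7$ polynomial over $\F_2$ is irreducible iff it has no root in $\F_2$ and no irreducible factor of degree $2$ or $3$ — equivalently, iff $\gcd$ with $x^4-x$ and with $x^8-x$ is trivial and the polynomial has no linear factor; alternatively one checks $x^{2^7}\equiv x \pmod{f}$ while $x^{2^j}\not\equiv x\pmod f$ for $j=1,\dots,6$. This is a finite check and is the only real obstacle; everything else is immediate from additivity. I would present the factorizations $p=x(x^3+x+1)$, $q=x(x^7+x^3+1)$, $r=x(x^7+x+1)$, state the irreducibility of the cofactors, and conclude via the additive-polynomial argument above.
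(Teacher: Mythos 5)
Your proposal is correct and follows essentially the same route as the paper: the paper's step ``$f(x)=f(y)$ iff $(x-y)^8+(x-y)^4+(x-y)=0$'' is exactly your additivity observation, and both arguments then reduce to showing that $x^3+x+1$, $x^7+x^3+1$, $x^7+x+1$ have roots only in $\F_{2^r}$ with $3\mid r$, respectively $7\mid r$. The only difference is that the paper leaves that last fact as ``an elementary exercise,'' whereas you correctly identify it as the irreducibility of these trinomials over $\F_2$ and outline the finite check.
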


\begin{proof}
Let us do $f(x):=x^8+x^4+x$, the other proofs go similarly. $f$ is a bijection if and only if $f$ is injective if and only if $f(x)=f(y)$ has 
only $x=y$ as solutions. $f(x)=f(y)$ if and only if $(x-y)^8+(x-y)^4+(x-y)=0$. $x=y$ is a solution, another solution would be equivalent to finding a zero of $x^7+x^3+1$. Now it is an elementary exercise to see that if $\alpha\in \F_{2^r}$ is a zero of this polynomial, then
$7| r$. 
\end{proof}

\begin{question}\label{Qu1}
(1)  Are $F=(x^8+x^2+x,y,z)$ and $G=(x^8+x^4+x,y,z)$ (tamely) equivalent? \\
(2) More general: Are $x^8+x^2+x$ and $x^8+x^4+x$ stably (tamely) equivalent?\\
\end{question}

The above question is particular to characteristic $p$, for consider the following:

\begin{lemma} \label{equiv} Let  $P,Q\in k[x]$. Assume that  $F:=(P(x), y, z)$ is equivalent to $G:=(Q(x),y,z)$. 
Then $P'$ and $Q'$ are equivalent, in particular $Q'(ax+b)=cP'$ for some $a,b,c\in k$, $ac\not =0$.
\end{lemma}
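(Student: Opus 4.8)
The plan is to push everything through the Jacobian determinant. The key observation is that the Jacobian matrix of a map of the special shape $(P(x),y,z)$ is diagonal, so $\det(\Jac(P(x),y,z))=P'(x)$, and likewise $\det(\Jac(Q(x),y,z))=Q'(x)$.

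First I would unwind the definition of equivalence: there are $N,M\in\GA_3(k)$ with $N\circ F\circ M=G$. Write $M=(M_1,M_2,M_3)$. Applying the chain rule for Jacobian matrices, taking determinants, and using the fact (recalled earlier in the paper) that an automorphism has Jacobian determinant in $k^*$ — so that $\mu:=\det(\Jac(N))\in k^*$ and $\nu:=\det(\Jac(M))\in k^*$ are \emph{constants} — one obtains the polynomial identity
\[ Q'(x)=\det(\Jac(G))=\mu\,\nu\,P'\bigl(M_1(x,y,z)\bigr). \]
In particular $P'(M_1)$ lies in $k[x]$.

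Next I would dispose of the degenerate case: if $P'$ is a constant (in particular if $P'=0$), then $P'(M_1)=P'$ and the identity above already reads $Q'=\mu\nu P'$, i.e.\ $Q'(ax+b)=cP'(x)$ with $a=1$, $b=0$, $c=\mu\nu$, and we are done. So assume $\deg P'\geq 1$. Then I claim $M_1\in k[x]$: were $M_1$ to involve $y$ (or $z$) nontrivially, say $\deg_y M_1=e\geq 1$, then comparing highest-$y$ terms gives $\deg_y P'(M_1)=e\cdot\deg P'\geq 1$, contradicting $P'(M_1)\in k[x]$. Having $M_1\in k[x]$, I then use that $M^{-1}=(N_1,N_2,N_3)$ satisfies $M_1(N_1(x,y,z))=x$ (the first coordinate of $M\circ M^{-1}=\mathrm{id}$, since $M_1$ only uses its first slot); substituting $y=z=0$ yields $M_1\circ\varphi=\mathrm{id}_{k[x]}$ for $\varphi(x):=N_1(x,0,0)$, whence $1=\deg(M_1)\deg(\varphi)$ forces $M_1=ax+b$ with $a\neq 0$. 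Plugging $M_1=ax+b$ into the displayed identity gives $P'(ax+b)=(\mu\nu)^{-1}Q'(x)$; substituting $x\mapsto a^{-1}(x-b)$ rewrites this as $Q'(a^{-1}x-a^{-1}b)=(\mu\nu)\,P'(x)$, which after renaming the constants is precisely the asserted $Q'(ax+b)=cP'(x)$ with $ac\neq 0$ — and it simultaneously displays $(P'(x),y,z)$ and $(Q'(x),y,z)$ as affinely equivalent.

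The step I expect to be the real obstacle is the implication "$M_1\in k[x]$ and $M\in\GA_3(k)\Rightarrow M_1$ affine" in characteristic $p$: the tempting shortcut via $\det(\Jac(M))\in k^*\Rightarrow M_1'(x)\in k^*$ is \emph{not} enough there (for instance $x+x^p$ has derivative $1$), so one genuinely needs the degree-of-composition argument with the inverse map, which should be spelled out with a little care. Everything else is bookkeeping with the chain rule and leading terms.
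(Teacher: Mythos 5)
Your proof is correct and follows essentially the same route as the paper: apply the chain rule to $\det(\Jac)$ of the composed maps, use that the automorphisms have constant Jacobian determinant to get $Q'$ equal to a constant times $P'$ composed with the first coordinate of the conjugating automorphism, and then conclude that coordinate is affine in $x$. The only difference is that you carefully justify the final step (the degenerate constant case, the leading-term argument forcing $M_1\in k[x]$, and the composition-with-the-inverse argument forcing $\deg M_1=1$), which the paper simply asserts; your version is the more complete one.
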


\begin{proof} Equivalent means there exist $S, T\in \GA_3(k)$ such that $SF=GT$. Write $\J$ for $\det(\Jac)$. 
Now $\J(S)=\lambda, \J(T)=\mu$ for some $\lambda,\mu\in k^*$. Using the chain rule we have
\[      \begin{array}{ll}
   &\J(SF)=\J(F)\cdot  (\J(S)\circ (F))= \frac{\partial P}{\partial x} \cdot   (\lambda\circ (F)) =\lambda  \frac{\partial P}{\partial x}\\
=&\J(GT)=\J(T)\cdot (\J(G)\circ (T)) =\mu \cdot ( \frac{\partial Q}{\partial x} \circ T)
\end{array}
 \]
so
\[ 
Q'(T)= \frac{\lambda}{\mu}P'\]
which means that $T=(T_1,T_2,T_3)$ and $T_1=ax+b$ where $a\in k^*, b\in k$, proving the lemma.
\end{proof}

\begin{corollary} Assume $\kar(k)=0$. Let  $P,Q\in k[x]$. Assume that  $F:=(P(x), y, z)$ is equivalent to $G:=(Q(x),y,z)$. 
Then $P$ and $Q$ are equivalent.
\end{corollary}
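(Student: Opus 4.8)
The plan is to deduce the corollary from Lemma~\ref{equiv} by antidifferentiation, with the hypothesis $\kar(k)=0$ entering at exactly one point. First I would apply Lemma~\ref{equiv} to the assumption that $F=(P(x),y,z)$ is equivalent to $G=(Q(x),y,z)$: it produces $a,b,c\in k$ with $ac\neq 0$ such that $Q'(ax+b)=cP'(x)$ in $k[x]$.

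Next, set $R(x):=Q(ax+b)$. By the chain rule $R'(x)=a\,Q'(ax+b)=ac\,P'(x)=(ac\,P)'(x)$, so $(R-ac\,P)'=0$. This is the step that uses $\kar(k)=0$: over a field of characteristic zero the kernel of $d/dx$ on $k[x]$ is precisely the constants $k$ (each monomial $x^m$ has the polynomial antiderivative $\tfrac{1}{m+1}x^{m+1}$, and $m+1\neq 0$), so $R-ac\,P$ equals some constant $d\in k$. Hence
\[ Q(ax+b)=(ac)\,P(x)+d,\qquad ac\neq 0, \]
which says exactly that $P$ and $Q$ are equivalent (affine reparametrisation $x\mapsto ax+b$ of the source together with the affine map $u\mapsto (ac)u+d$ on the target); in the language of $\GA_3(k)$ the affine automorphisms $S:=(ac\,x+d,\,y,\,z)$ and $T:=(ax+b,\,y,\,z)$ satisfy $SF=GT$.

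There is no genuine obstacle beyond this bookkeeping; the only delicate point — and the reason the hypothesis $\kar(k)=0$ cannot be dropped — is the integration step, which fails in characteristic $p$ because $x^{p-1}$ has no polynomial antiderivative and $x^p$ lies in the kernel of $d/dx$. This is precisely the mechanism behind Question~\ref{Qu1}: over $\F_2$ both $x^8+x^2+x$ and $x^8+x^4+x$ have derivative $1$, so their derivatives are (trivially) equivalent, yet one cannot recover an equivalence of the polynomials themselves, so the char~$0$ conclusion has no analogue there.
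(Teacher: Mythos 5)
Your proposal is correct and follows essentially the same route as the paper: invoke Lemma~\ref{equiv} and then integrate, using that in characteristic zero every monomial has a polynomial antiderivative and the kernel of $d/dx$ is the constants. You are in fact slightly more careful than the paper's one-line proof, since you track the chain-rule factor $a$ and the constant of integration $d$ explicitly and spell out the resulting affine maps $S$ and $T$ witnessing the equivalence.
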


\begin{proof} Lemma \ref{equiv} shows that  $ P' (ax+b)= c Q'$ for some $a,b,c\in k$, $ac\not =0$. In
characteristic zero we can now integrate both sides and get $a^{-1}P(ax+b)=cQ$ proving the corollary.
\end{proof}

Note that in the ``integrate both sides'' part the characteristic zero is used, as $(x+x^2+x^8)'=(x+x^4+x^8)'$ in characteristic $2$. 

Note that all the above one-variable polynomials $x^8+x^2+x, x^4+x^2+x$ have a stabilisation which is tamely equivalent to a polynomial endomorphism of degree 2. In this respect, note the following proposition, which is lemma 6.2.5 from \cite{EFC}:

\begin{proposition}
Let $F\in \ME_n(k)$ where $k$ is a field. Then there exists $m\in \N$, and $G,H\in \TA_{n+m}(k)$ such that (writing the stabilisation of $F$ as $\tilde{F}\in \ME_{n+m}(k)$)  $G\tilde{F}H$ is of degree 3 or less. 
\end{proposition}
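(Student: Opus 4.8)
The plan is to reduce the degree of a stabilized endomorphism in stages, using the classical trick of introducing auxiliary variables that "store" high-degree subexpressions. This is the standard degree-reduction argument (it goes back to the reduction of polynomial maps to quadratic or cubic form, cf. the treatment of the Jacobian Conjecture), and I would carry it out in two passes: first reduce an arbitrary endomorphism to one of degree $\leq 3$ after one "global" stabilization, then check that every step is a composition with tame maps.

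\textbf{Step 1: Replace monomials by new variables.} Write $F=(F_1,\ldots,F_n)$ with $F_i\in k[X_1,\ldots,X_n]$. For each monomial $M$ of degree $\geq 4$ occurring in some $F_i$, factor $M=M'\cdot M''$ with $\deg(M')=2$ and $\deg(M'')=\deg(M)-2$ (or split as evenly as convenient). The idea is to introduce a new variable $Y$ together with the triangular map $(X,Y)\mapsto (X, Y+M'')$ and its inverse, so that after conjugation the occurrence of $M$ in $F_i$ becomes $M'\cdot Y$, which has degree $2+1=3$. Concretely, if $\widetilde F=(F,Y_1,\ldots,Y_m)$ is a stabilization, then for suitable triangular $G_0,H_0\in \TA_{n+m}(k)$ the map $G_0\widetilde F H_0$ has each coordinate of degree $\leq 3$ except possibly in the new variables, where we have introduced high-degree expressions $M''$. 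One then iterates: the expressions $M''$ themselves get their high-degree monomials peeled off using further new variables. Since each peeling strictly decreases the maximal degree of the "unresolved" part, the process terminates.

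\textbf{Step 2: Bookkeeping and tameness.} The key point to make precise is that every elementary move is (a) a composition on the left or right with a \emph{triangular} automorphism of $\ME_{n+m}(k)$ — namely a map of the form $Z_j\mapsto Z_j + (\text{polynomial in the other variables})$ — and (b) that after all moves the resulting map has degree $\leq 3$ in \emph{every} coordinate simultaneously. For (a) one observes that the substitution "$Y+M''$" is triangular precisely when $M''$ does not involve $Y$, which we can always arrange by ordering the new variables so that each is used only to abbreviate expressions in the previously-existing variables. For (b), one tracks that the coordinates corresponding to original variables end up with degree $\leq 3$ (each degree-$\geq 4$ monomial has been replaced by a degree-$3$ product), while each new coordinate is of the form $Y_j + (\text{degree} \leq 3\text{ expression})$ by construction. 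Accumulating the finitely many triangular moves into $G:=G_N\cdots G_1$ and $H:=H_1\cdots H_N$ (both in $\TA_{n+m}(k)$, since $\TA$ is a group) gives the statement.

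\textbf{The main obstacle} is the bookkeeping in Step 2: making sure the new variables can be introduced in an order that keeps every move triangular, and verifying that reducing the degree in one coordinate does not inadvertently raise it in another. The cleanest way to handle this is to process one monomial at a time and to \emph{never reuse} an auxiliary variable — i.e. allocate a fresh $Y_j$ for each high-degree monomial encountered — so that the dependency graph among the variables is acyclic and the associated substitution maps are automatically triangular after a relabelling. With that convention the termination and triangularity are both transparent, and one only needs to count: the total number of auxiliary variables $m$ is bounded by (number of monomials of degree $\geq 4$ in $F$) times (roughly $\log_2 \deg(F)$) for the iterated splitting. Since the excerpt attributes this to \cite{EFC}, Lemma 6.2.5, I would cite that for the detailed verification and present here only the construction sketch above.
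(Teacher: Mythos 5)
The paper offers no proof of this proposition at all: it is quoted verbatim as Lemma 6.2.5 of \cite{EFC} and the reader is sent there. So the only question is whether your sketch is a correct rendering of that standard argument, and there is a genuine problem with the central step. You claim that introducing $Y$ and \emph{conjugating} by $(X,Y)\mapsto(X,Y+M'')$ turns the occurrence of $M=M'M''$ in $F_i$ into $M'\cdot Y$. It does not. Computing the conjugation on the stabilisation $\tilde F=(F(X),Y)$ gives $(F(X),\,Y+M''(X)-M''(F(X)))$: the coordinate $F_i$ is untouched, and instead the new coordinate acquires $M''(F(X))$, of degree $\deg M''\cdot\deg F$ --- strictly worse. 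The same obstruction defeats any one-sided variant of your move: left-composition substitutes the degree-$d$ components $F_j$ into the left factor, so a left factor whose correction term involves the old variables (e.g.\ $X_i\mapsto X_i-cM'(X)Y$) produces $M'(F(X))$ and blows up the degree. "Replacing a subexpression of $F_i$ by a new variable'' is simply not a composition on either side, so the mechanism by which the monomial $M$ is supposed to disappear is never actually exhibited.

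The repair, which is what the cited lemma does, is to use \emph{different} tame maps on the two sides and to make the left factor nonlinear only in the new variables. Concretely, for a monomial $cM=cM_1M_2$ of degree $d\geq 4$ in $F_i$ with $\deg M_1,\deg M_2\geq 2$, adjoin two fresh variables $T,U$, right-compose with the elementary maps $(X,T+M_1(X),U)$ and $(X,T,U+M_2(X))$ to obtain $(F(X),\,T+M_1(X),\,U+M_2(X))$, and then left-compose with the elementary map $X_i\mapsto X_i-cTU$ (tame, since $TU$ does not involve $X_i$). Substituting the components into $-cTU$ yields $-c(T+M_1)(U+M_2)$, which cancels $cM$ in $F_i$ at the cost of the cross terms $-c(TM_2+UM_1+TU)$ and the new coordinates $T+M_1$, $U+M_2$, all of degree at most $d-1$; induction on $d$ then finishes. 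Your iteration/termination scheme and the final bookkeeping (fresh variables, acyclic dependencies, $\TA_{n+m}(k)$ closed under composition) are fine and carry over to this corrected step, but as written the proof's engine does not run.
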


\subsubsection{Locally finite in degree 2 over $\F_2$} 

We now want to classify the locally finite automorphisms among the 236544 automorphisms over $\F_2$ of degree 2 (or less), 
and we want to determine the minimum polynomial of each. Using  lemma \ref{LFPEconjugacy}, we may classify up to conjugation by a linear map. 
 We found   $262$ locally finite classes under linear conjugation, with the following minimum polynomials:\\
\ \\
\begin{tabular}{|l|c|c|}
 \hline
Minimumpolynomial & $\#$ & $t$ \\ \hline \hline
$F^5+F^4+F+I$ & $16$ & $8$\\ \hline
$F^4+F^3+F^2+I$ & $8$ & $7$\\ \hline
$F^4+F^3+F+I$ & $26$ & $6$\\ \hline
$F^4+I$ & $12$ & $4$\\ \hline
$F^4+F^2+F+I$ & $8$ & $7$\\ \hline
$F^3+F^2+F+I$ & $139$ & $4$\\ \hline
$F^3+F^2+I$ & $2$ & $7$\\ \hline
$F^3+F+I$ & $2$ & $7$\\ \hline
$F^3+I$ & $14$ & $3$\\ \hline
$F^2+I$ & $34$ & $2$\\ \hline
$F+I$ & $1$ & $1$\\ \hline
\end{tabular}\\
\ \\
In the above tabular, $\#$  denotes the number of {\em conjugacy classes} (i.e. not elements) having this minimum polynomial, while $t$ denotes the order of the automorphism (see lemma \ref{order}).  Furthermore, observe that $\#$ displayed is the number of conjugacy classes that satisfy this relation, not the total number of automorphisms.

\subsubsection{Degree 3 over $\F_2$}

We only comsidered the endomorphisms of the form $F=I+H$, where $H$ is homogeneous of degree $3$. (The automorphisms of degree 3 or less  in general was just out of reach.)
The below tabular describes the set of $F\in \MA^3_3(\F_2)$ having the following criteria:
\begin{itemize}
\item $F$ is a mock automorphism,
\item $F=I+H$, $H$ homogeneous of degree 3.
\end{itemize}
 We found
$1520$ endomorphisms satisfying the above requirements. The tabular lists them in 20 classes up to conjugation by  linear maps:\\
\ \\
\begin{tabular}{|l|l|l|l|c|}
 \hline
& Representant &  Bijection over & $\#$\\ \hline \hline
{\bf 1.}& $\mathbf{(x,y,z)}$ \\ \hline
1a. &$(x,y,z)$ &all & 1\\ \hline
1b. &$(x,y,z+x^2y+xy^2)$ &all & 7 \\ \hline
1c. &$(x,y,z+x^3+x^2y+y^3)$ &all & 14 \\ \hline
1d. &$(x,y+x^3,z+x^3)$ &all & 21 \\ \hline
1e. &$(x,y,z+x^3+x^2y+xy^2)$ & all & 21\\ \hline
1f. &$(x,y,z+x^2y)$ & all & 42\\ \hline
1g. &$(x,y+x^3,z+xy^2)$ &all & 42\\ \hline
1h. &$(x,y+x^3,z+x^2y+xy^2)$ & all & 42 \\ \hline
1i. &$(x,y + z^3,z+x^2y)$ &all & 42 \\ \hline
1j. &$(x,y+x^3,z+ x^2y + y^3)$ & all & 84\\ \hline
1k. &$(x,y+x^3,z+y^3)$ & all & 84\\ \hline
{\bf 2.} & $\mathbf{(x,y, z+x^3z^4+xz^2)}$\\\hline
 2&$(x,y+x^3+xz^2,z+xy^2+xz^2)$ & $\F_2,\F_4,\F_{16},\F_{32}$ & 56\\ \hline
{\bf 3.}&$\mathbf{ (x,y,z+x^3z^2+x^3z^4)}$ \\ \hline
3a. &$(x,y+ xz^2,z+x^2y + xy^2)$ & $\F_2,\F_4$ & 84 \\ \hline
3b. &$(x,y+xz^2,z+x^3+x^2y+xy^2)$ & $\F_2,\F_4$ & 84\\ \hline
{\bf 4.}& $\mathbf{(x,y,z+xz^2+xz^6)}$ \\ \hline
4a. &$(x,y+x^3+z^3,z+x^3+xy^2+xz^2)$ & $\F_2$ & 168\\ \hline
4b. &$(x,y+z^3,z+xy^2+xz^2)$ & $\F_2$& 168\\ \hline
{\bf 5.}&$\mathbf{ (x,y,z+x^3z^2+xy^2z^4+x^2yz^4+x^3z^6)}$ \\ \hline
5a. &$(x,y+xz^2,z+xy^2+y^3)$ & $\F_2$ & 168\\ \hline
5b. &$(x,y+xz^2,z+x^3+x^2y+y^3)$ & $\F_2$ & 168\\ \hline
{\bf 6.}& $\mathbf{(x,y,z+x^3z^2+xy^2z^2+x^2yz^4+x^3z^6)}$ \\ \hline
6. &$(x,y+xy^2+xz^2,z+x^3+x^2y)$ &  $\F_2$ & 168\\ \hline
{\bf 7.}& $\mathbf{(x+y^2z,y+x^2z+y^2z,z+x^3+xy^2+y^3)}$ \\ \hline
7. &$(x+y^2z,y+x^2z+y^2z,z+x^3+xy^2+y^3)$ & $\F_2$ & 56\\ \hline
\end{tabular}\\
\ \\
The first column gives a representant up to linear conjugation, and the bold fonted one gives a representant under tamely equivalence for the classes listed beneath it. 
The second column lists for which field extensions (from $\F_{2^r}$ where $1\leq r\leq 5$) the map is also a bijection of $\F_{2^r}^3$
Class 1 are the 400 automorphisms, all of them are tame and sastisfy the dependence conjecture. All classes 
are tamely equivalent to a map of the form $(x,y,P(x,y,z))$, except the last class 7 - these maps do not satisfy the Dependence Criterium, which makes them very interesting! 

The above tabular might make one think that any mock automorphism in $\MA_3(\F_2)$ of the form $F= (x,y+H_2, z+H_3)$ where $H_2,H_3$ are homgeneous of the same degree, then one can tamely change the map into one of the form $(x,y,z+K)$, but the below conjecture might give a counterexample:

\begin{conjecture} Let $F=(x, y+y^8z^2+y^2z^8, z+y^6z^4+y^4z^6)$. Then $F$ is not tamely equivalent to 
a map of the form $(x,y,z+K)$ .
\end{conjecture}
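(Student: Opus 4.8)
\medskip
\noindent\emph{Sketch of a possible approach.}\quad
Write $F=(x,\phi)$ with $\phi=(y+H_2,\,z+H_3)$, where $H_2=y^8z^2+y^2z^8=\bigl(yz(y+z)(y^2+yz+z^2)\bigr)^2$ and $H_3=y^6z^4+y^4z^6=\bigl(y^2z^2(y+z)\bigr)^2$; since $H_2,H_3$ are perfect squares, in characteristic $2$ the whole Jacobian matrix $\Jac(F)$ is the identity. The plan is to use invariants of tame equivalence. First, $F$ is a mock automorphism of $\F_2^3$ ($H_2,H_3$ vanish identically on $\F_2$, so $F$ is the identity on $\F_2^3$), but $F$ is \emph{not} a polynomial automorphism: over $\F_4$ one has $\phi=(y,\,z+y^3z+yz^3)$, and for $y\in\F_4^\ast$ the map $z\mapsto z+y^3z+yz^3=yz^3$ is not injective, so $F$ is not even a bijection of $\F_4^3$. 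Since $\det\Jac(F)\in\F_2^\ast$, $F$ is \'etale; an \'etale birational endomorphism of affine $n$-space over a field is an automorphism (by Zariski's main theorem it is an open immersion, and non-surjectivity would produce a nonconstant unit), so $F$ is not birational and $d(F):=[\F_2(x,y,z):\F_2(F)]\ge 2$.

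Now suppose $NFM=G:=(x,y,z+K)$ with $N,M\in\TA_3(\F_2)$. Then $G$ is again a mock automorphism of $\F_2^3$; since $N$ and $M$ are automorphisms, $\F_2(G)=\F_2(FM)=\sigma_M(\F_2(F))$ for the field automorphism $\sigma_M$ of $\F_2(x,y,z)$ induced by $M$ (this is the chain‑rule bookkeeping of Lemma~\ref{equiv}), hence $d(G)=d(F)$; likewise the ``bijectivity spectrum'' $R(F):=\{\,r\ge 1:\ F\text{ is a bijection of }\F_{2^r}^3\,\}$ equals $R(G)$. For a mock automorphism of the form $(x,y,z+K)$ the identity $\det\Jac(G)=1+\partial K/\partial z\in\F_2^\ast$ forces $\partial K/\partial z=0$, i.e. $K\in\F_2[x,y,z^2]$, and then $[\F_2(x,y,z):\F_2(x,y,z+K)]=\deg_z(z+K)\in\{1,2,4,6,\dots\}$. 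Thus \emph{if $d(F)$ were odd and $\ge 3$ the conjecture would follow at once.} An elimination computation (parametrise the generic fibre of $\phi$ by $t=y/z$; it reduces to $(vt+u)B(t)^9=t^2(t+1)^2A(t)^{10}$ with $A=vt^4+(u+v)t^2+v$ and $B=t^4+t^3+t^2+1$, after discarding the spurious factors $t^2$ and $(t+1)^2$) gives $d(F)=42$, which is even; so this single invariant does not suffice and one would have to couple it with $R(F)$. The weak point is that $R$ depends very delicately on $K$, and I see no way to rule out \emph{all} $K$ this way.

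The structurally correct reformulation is: $F$ is tamely equivalent to some $(x,y,z+K)$ iff there exist $N,M\in\TA_3(\F_2)$ with $\pi NFM=\pi$, where $\pi(x,y,z)=(x,y)$; equivalently, writing $\rho_1=\pi M^{-1}$ and $\rho_2=\pi N$ (each a coordinate projection twisted by a tame automorphism, i.e. a ``tame line‑fibration'' of three‑space), one needs $\rho_1=\rho_2\circ F$. Since $F$ preserves the $x$‑coordinate, one would like to show that $\rho_1$ (hence $\rho_2$) refines the fibration $\{x=\text{const}\}$, so that the question collapses, fibrewise over $x$, to a statement about the $2$‑variable map $\phi$: that no coordinate functions $g,h\in\F_2[y,z]$ satisfy $h(\phi)\in\F_2[g]$ (by Jung--van der Kulk and Abhyankar--Moh, the line‑fibrations of the plane are exactly the level‑set families of coordinate functions). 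I would try to prove this $2$‑variable statement from the rigid behaviour of $\phi$ at infinity: the leading form $(H_2,H_3)$ has common factor $\bigl(yz(y+z)\bigr)^2$ and induces on the line at infinity the degree‑$4$ self‑map $[y:z]\mapsto[(y^2+yz+z^2)^2:(yz)^2]$, and one would analyse how a coordinate pencil of the plane can possibly transform under a map carrying this data.

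The main obstacle has two faces. First, the reduction to $\phi$ is \emph{not} automatic: a tame automorphism of three‑space may genuinely mix $x$ into $y,z$, so $\rho_1,\rho_2$ need not refine the $x$‑fibration, and proving one may always choose them so (because $F=(x,\phi)$ is ``untwisted in $x$'') requires precisely the control over $\TA_3(\F_2)$ that this paper flags as unavailable. Second, even the purely $2$‑dimensional assertion — that $\phi$ preserves no pair of line‑fibrations of the plane — is a quantifier over \emph{all} coordinate functions of $\F_2[y,z]$, including the Frobenius‑type ones peculiar to characteristic $2$, and turning the at‑infinity picture into a proof needs a careful study of how coordinate pencils behave under the degree‑$10$ map $\phi$. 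Lacking a decisive single invariant, the realistic fall‑back is to verify the conjecture by computer for $K$ up to a moderate degree, in the spirit of the tables of this paper, together with a proof of the clean $2$‑variable statement as a partial result.
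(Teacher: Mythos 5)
This statement is posed in the paper as a \emph{conjecture}, not a theorem: the authors give no proof and explicitly remark that it is ``a hard one unless one finds a good invariant of maps of the form $(x,y,z+K)$.'' So there is no proof in the paper to compare against, and your proposal --- which you candidly present as a sketch --- does not close the gap either. The concrete parts of your sketch that I can check are correct: $H_2$ and $H_3$ are indeed squares, so $\Jac(F)=I$; $F$ is the identity on $\F_2^3$ but fails to be injective on $\F_4^3$ (for $y\in\F_4^*$ the third coordinate becomes $z\mapsto yz^3$), so $F$ is a mock automorphism but not an automorphism; the geometric degree $d(\cdot)=[\F_2(x,y,z):\F_2(\cdot)]$ and the bijectivity spectrum are genuinely invariant under (tame) equivalence; and for a mock automorphism $G=(x,y,z+K)$ the condition $\det\Jac(G)=1$ forces $K\in\F_2[x,y,z^2]$, so $d(G)$ is $1$ or even. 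Identifying $d$ as a candidate invariant and then honestly showing it cannot separate $F$ from the target class is a worthwhile observation, but it is a negative result about your own method, not progress on the conjecture.

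The gaps are the ones you yourself name, plus two you understate. First, the entire second half rests on reducing to the plane map $\phi$, which requires showing that the tame fibrations $\rho_1,\rho_2$ can be chosen to refine $\{x=\mathrm{const}\}$; this is precisely the control over $\TA_3(\F_2)$ that nobody has, so the ``structurally correct reformulation'' is a restatement of the difficulty, not a step toward resolving it. Second, the appeal to Abhyankar--Moh to identify line-fibrations of the plane with coordinate pencils is a characteristic-zero tool; over $\F_2$, with maps of degree divisible by $p=2$ (your $\phi$ has degree $10$ components built from Frobenius squares), the epimorphism theorem fails and the classification of $\mathbb{A}^1$-fibrations of the plane you want to invoke is not available in the form you need. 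Finally, the computation $d(F)=42$ is asserted via an elimination whose displayed equation has degree $44$ in $t$ with an unexplained discarding step, so it should be flagged as unverified; since you only use it to conclude that the invariant fails, this does not damage your (correctly negative) conclusion, but it should not be quoted as established. In short: the statement remains open, both in the paper and after your proposal.
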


Due to our lack of knowledge of the automorphism group $\TA_3(\F_2)$, this conjecture is a hard one unless one finds a good invariant
of maps of the form $(x,y, z+K)$. 

\subsection{The finite field of three elements: $\mathbb{F}_3$}\label{F3}

\subsubsection{Degree 2 over $\F_3$}

Over $\F_3$, there are $(27-1)(27-3)(27-9)=11232$ linear automorphisms and $27*11232=303264$ affine automorphisms.\\
From corollary \ref{Jac1aut} it follows that if $\det(\Jac(F))=1$ and $\deg(F)\leq 2$,  then $F$ is an automorphism - so we will not encounter any mock automorphisms which aren't an automorphism in this class.
There are $2835$ automorphisms of degree less or equal to $2$ having affine part identity, so there are $2835\cdot 303264=$
automorphisms of degree $2$ or less. They all turned out to be tame.

\subsubsection{Locally finite}\label{locFin32}

We computed all  conjugacy classes under linear maps of locally finite automorphisms of $\F_3^3$ (see lemma
 \ref{CompLocFin}). There are $80$ orbits of affine automorphisms, composing a representative of each class with all of the $2,835$ tame automorphisms, gives us $226,800$ representatives of ``conjugacy classes''. We checked for each of them whether it was locally finite or not.
It turns out that $25,872$ of these conjugacy classes are locally finite. And there are exactly a hundred different minimum polynomials that can appear. 

Of the appearing minimal polynomials in this list, all polynomials of degree 3 appear in this list. The highest minimum polynomials are of degree 10. We list just a (sort of random, non-affine) ten minimum polynomials, their order (which is determined by the minimum polynomial), number of {\em conjugacy classes} with this minimum polynomial, and one example.  The reader interested in the complete list we refer to chapter 6 of the Ph.-D. thesis of the second author \cite{RoelThesis}.\\
\ \\
{\tiny
\begin{tabular}{|l|c|c|l|}
\hline
Minimum polynomial & order & $\sharp$ & example\\ \hline \hline
$F^2+2I$ & 2 & 509 & $\left(\begin{array}{l} 2x^2+xy+xz+2x+y^2+z^2\\ 2x^2+xy+xz+y^2+2y+z^2\\ 2x^2+xy+xz+y^2+z^2+2z\end{array}\right)$ \\ \hline
$F^3+F^2+2F+2I$ & 6 & 5084 & $\left(\begin{array}{l} x^2+xy+2x+y^2\\ x^2+xy+y^2+2y\\ 2x^2+2y^2+2z\end{array}\right)$ \\ \hline
$F^4+2F^2+2F+2I$ & 24 & 2 & $\left(\begin{array}{l} x^2+xz+2x+y^2+2y+z^2\\ 2x+y+z\\ x^2+xz+x+y^2+y+z^2+z\end{array}\right)$ \\ \hline
$F^4+2F^3+2F+I$ & 9 & 3804 & $\left(\begin{array}{l} 2x^2+2xy+x+2y^2+1\\ 2x^2+2xy+2y^2+y+1\\ 2x^2+xy+2x+2y^2+z+1\end{array}\right)$\\ \hline
$F^4+F^3+F^2+2F+I$ & 8 & 38 & $\left(\begin{array}{l} x^2+2xy+xz+x+y^2+yz+z^2+2z+2\\ x^2+2xy+xz+y^2+yz+z^2+2z\\ 2x^2+xy+2xz+2x+2y^2+2yz+2y+2z^2+2\end{array}\right)$ \\ \hline
$F^5+2F^3+2F^2+F+2I$ & 8 & 8 & $\left(\begin{array}{l} 2x^2+xy+xz+y^2+2y+z^2\\ 2x^2+xy+xz+2x+y^2+2y+z^2+z\\ 2x^2+xy+xz+x+y^2+z^2+z\end{array}\right)$ \\ \hline
$F^6+F^5+2F^4+F^3+2I$ & 24 & 16 & $\left(\begin{array}{l} y^2+yz+2y+z^2\\ 2x+y^2+yz+2y+z^2+z\\ x+y^2+yz+z^2+z\end{array}\right)$ \\ \hline
$F^7+F^6+2F+2I $ & 18 & 396 & $\left(\begin{array}{l} 2x^2+2xz+2y^2+2y+2z^2+2z+1\\ x^2+xz+2y+z^2\\ 2x^2+2xz+2x+2y^2+2y+2z^2+2\end{array}\right)$ \\ \hline
$F^{10}+F^8+2F^5+F^2+2F+2I $ & 26 & 40 & $\left(\begin{array}{l} y+2z^2+z+1\\ x^2+2xz+x+z^2+1\\ x+z+1\end{array}\right)$ \\ \hline
$F^{10}+F^9+2F^8+F^7+F^6+F^5+2F^3+2F+I$ & 13 & 48 & $\left(\begin{array}{l} 2x^2+2xy+2xz+y^2+yz+y+2z^2+2z+1\\ 2x+y+z+2\\ 2x^2+2xy+2xz+2x+yz+y+2z^2+2z+1\end{array}\right)$ \\ \hline
\end{tabular}}

\subsubsection{Degree 3 over $\F_3$}

The amount of elements in $\overline{\ME}_3(\F_3)$ of the form $(x,y,z)+(0,H_2,H_3)$ (i.e. satisfying the dependency criterion)
where $H_2,H_3$ are homogeneous of degree 3 is too large: this set has $3^{20}$ elements which was too large for our system to scan through; however, we think that this case is feasible for someone having a stronger, dedicated system and a little more time.

\subsection{The finite fields $\mathbb{F}_4$ and  $\mathbb{F}_5$}\label{F4F5F7}

In this section we will only restrict to degree 2, and to the maps which satisfy the dependency conjecture.
Thus, in this section we restrict to maps  $F$ of the form $(x+H_1,y+H_2,z)$ where $H_1,H_2$ are of degree 2. 

\subsection{The finite field $\F_4$}

There are $(64-1)(64-4)(64-16)=181,440$ linear automorphisms and $64*181,440=11,612,160$ affine automorphisms.
We considered the follwing maps:
\begin{itemize}
\item $F\in \overline{\ME}_3^2(\F_4)$,
\item $F$ is a mock automorphism,
\item $F$ is of the form $(x+H_1(x,y,z), y+H_2(x,y,z),z)$ (i.e. $F$  satisfies the  dependency criterion).
\end{itemize}
and we counted $40,384$ such maps.
Under tame equivalence, we have the following classes:
\begin{itemize}
\item[1] $(x,y,z)$ (tame automorphisms)
\item[2] $(x+x^2+x^4, y, z)$ 
\end{itemize}
So, surprisingly, we only find a subset of the classes we found over $\F_2$. Well, not really surprising - the dependency criterion removes the classes 3 and 4 of theorem \ref{T1} from the list. We conjecture that the four classes of theorem \ref{T1} are the same for $\F_4$:

\begin{conjecture} (i) Suppose $F\in \ME_3^2(\F_4)$ is a mock automorphism of $\F_4$.
Then $F$ is tamely equivalent to $(P(x),y,z)$ where \[ P=x, P=x^4+x^2+x, P=x^8+x^4+x, \textup{~or~}P=x^8+x^2+x.\]
(ii) Suppose $F\in \ME_3^2(L)$ is a mock automorphism of $L$, where $[L:\F_2]<\infty$.
Then $F$ is tamely equivalent to $(P(x),y,z)$ where \[ P=x, P=x^4+x^2+x, P=x^8+x^4+x, \textup{~or~}P=x^8+x^2+x.\]
If $3|[L:\F_2]$ then one should remove the class of $P=x^4+x^2+x$, and if $7|[L:\F_2]$ then one should remove the classes of 
$ P=x^8+x^4+x$ and $P=x^8+x^2+x$. 
\end{conjecture}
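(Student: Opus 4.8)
We outline a possible line of attack.

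First, in both (i) and (ii) one may assume that the affine part of $F$ is the identity. Indeed, a mock automorphism $F$ has $\det(\Jac(F))\in k^{*}$, so the constant term $\det(\Jac(F))(0)$ equals $\det(L)$, where $L$ is the linear part of $F$; hence $L\in\GL_{3}(k)$, the affine part $\alpha$ of $F$ is an affine automorphism, $\alpha^{-1}F$ is again a mock automorphism lying in $\overline{\ME}_{3}^{2}(k)$, and since $\alpha\in\TA_{3}(k)$ the maps $F$ and $\alpha^{-1}F$ have the same tame-equivalence class. So we may write $F=I+Q$ with $Q=(Q_{1},Q_{2},Q_{3})$ a triple of ternary quadratic forms over $\F_{4}$, respectively $L$. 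Moreover conjugation by $M\in\GL_{3}(k)$, $F\mapsto M^{-1}FM$, keeps us inside $\overline{\ME}_{3}^{2}(k)$, acts on the quadratic part by $Q\mapsto M^{-1}\circ Q\circ M$, and — being a (very special) tame equivalence — preserves being mock, being a bijection of $k^{3}$, and the tame-equivalence class.

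For part (i) the key point is that, after this reduction, the problem is \emph{finite}: although $\#\overline{\ME}_{3}^{2}(\F_{4})=4^{18}$ is itself too large for a naive scan — and the search reported above, which imposes the dependence criterion, sees only the classes of $(x,y,z)$ and $(x^{4}+x^{2}+x,y,z)$ — quotienting first by $\GL_{3}(\F_{4})$-conjugation (order $181440$) brings the number of triples $Q$ that must be examined down to the order of $10^{6}$, which is feasible. The plan is: (a) enumerate the $\GL_{3}(\F_{4})$-congruence classes of triples $(Q_{1},Q_{2},Q_{3})$ of ternary quadratic forms; (b) for one representative of each, compute $\det(\Jac(I+Q))$ and, when it lies in $\F_{4}^{*}$, test whether $I+Q$ permutes $\F_{4}^{3}$; (c) for each surviving $Q$, decide into which of the four classes of the statement it falls. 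Here one should keep in mind that $(x^{8}+x^{4}+x,y,z)$ and $(x^{8}+x^{2}+x,y,z)$ do occur among degree-$2$ maps: for instance $(x+y^{2},\,y+x^{2}+z^{2},\,z+x^{2})$ is tamely equivalent to $(x^{8}+x^{4}+x,y,z)$ by the change exhibited in the proof of Theorem~\ref{T1}, and it fails the dependence criterion, which is exactly why the reported search misses it. One direction of (c) is easy — exhibiting an explicit tame reduction of $I+Q$ to one of the four maps $(P(x),y,z)$, in the style of that same proof; the other direction (certifying that $I+Q$ is \emph{not} tamely equivalent to a given one, or separating two of the four classes) must go through an invariant, and the natural one is the set $\{\,s\in\N : I+Q\text{ is a bijection of }\F_{4^{s}}^{3}\,\}$, which is a tame-equivalence invariant because a tame equivalence defined over $\F_{4}$ is a bijection of every $\F_{4^{s}}^{3}$, and which by Lemma~\ref{L1} separates the class of $(x,y,z)$ (all $s$), of $(x^{4}+x^{2}+x,y,z)$ ($3\nmid s$), and of the two octic ones ($7\nmid s$). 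I expect the main obstacle in (i) to be step (c): one must be confident that every mock automorphism surfacing in the enumeration really does reduce tamely to the expected $(P(x),y,z)$, and that this bijectivity invariant is fine enough — separating $x^{8}+x^{4}+x$ from $x^{8}+x^{2}+x$ is anyway open (Question~\ref{Qu1}).

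Part (ii) splits into an easy half and a genuinely open half. The removal clauses are the easy half: under the correspondence between $\F_{2}$-linear (additive) polynomials and $\F_{2}[t]$ that sends $x^{2^{i}}$ to $t^{i}$, the polynomials $x^{4}+x^{2}+x$, $x^{8}+x^{4}+x$, $x^{8}+x^{2}+x$ correspond to $t^{2}+t+1$ and to the two irreducible cubics over $\F_{2}$, which divide $t^{3}-1$, respectively $t^{7}-1$; hence by the computation in the proof of Lemma~\ref{L1} the map $(P(x),y,z)$ already fails to be a bijection of $L^{3}$ as soon as $3\mid[L:\F_{2}]$, respectively $7\mid[L:\F_{2}]$, so it is not a mock automorphism of $L$ and drops out. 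The substantive half is the converse: that no degree-$2$ mock automorphism of $L^{3}$ lies outside the (at most four) listed classes. I would try to prove first the structural lemma that every degree-$2$ mock automorphism of $L^{3}$ is tamely equivalent over $L$ to a map $(P(x),y,z)$ with $P\in L[x]$ and $P'\in L^{*}$ (equivalently, in characteristic $2$, $P=cx+g(x^{2})$ with $c\in L^{*}$), and then show that the two further constraints — $P$ permutes $L$, and $(P(x),y,z)$ is tamely equivalent to a map of degree $2$ — force $P$ into one of the four $\F_{2}$-classes. Lemma~\ref{equiv} contributes nothing to this last step, since all four candidates have $P'$ a unit and hence equivalent derivatives; as in part (i), the finer separation is carried by bijectivity over the extensions of $L$.

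The crux — and the reason the statement is only conjectural — is the structural lemma just mentioned. Over $\F_{2}$ it holds because $\overline{\ME}_{3}^{2}(\F_{2})$ was scanned exhaustively in Theorem~\ref{T1}, but for general $L$ there is no finite reduction: one would need either a classification, uniform in the characteristic-$2$ field $L$, of degree-$2$ endomorphisms of $L^{3}$ with constant nonzero Jacobian — presumably by base-changing to $\overline{\F}_{2}$, analysing the resulting (generally non-invertible) ``fake automorphisms'' of $\overline{\F}_{2}^{\,3}$, and descending — or a direct normal-form theorem for the quadratic triple $(Q_{1},Q_{2},Q_{3})$ under $\GL_{3}(L)$-congruence that exploits the strong constraint that $I+Q$ permute $L^{3}$. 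Either route seems to need a new idea; everything else above is bookkeeping on top of Theorem~\ref{T1}, Lemma~\ref{L1} and Lemma~\ref{equiv}.
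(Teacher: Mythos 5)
The statement you were asked to prove is an explicit \emph{conjecture} in the paper: the authors give no proof, and indeed remark that ``it would be interesting to see a proof of this conjecture by theoretical means --- or a counterexample of course.'' So there is no paper proof to compare against, and your proposal --- which is a plan of attack rather than a proof, and says so --- cannot be faulted for failing to close what is genuinely open. The pieces of your outline that can be checked are correct: the reduction to affine part identity (since $\det(\Jac(F))$ evaluated at the origin is the determinant of the linear part, the affine part is invertible, hence tame, and composing with it preserves mockness and the tame class); the observation that the paper's $\F_4$ search imposes the dependence criterion and therefore cannot see the classes of $x^8+x^4+x$ and $x^8+x^2+x$, even though representatives such as $(x+y^2,\,y+x^2+z^2,\,z+x^2)$ exist in degree $2$ and fail that criterion; the use of the set of extensions over which $F$ is bijective as a tame-equivalence invariant, which via Lemma~\ref{L1} separates the identity class, the $x^4+x^2+x$ class, and the pair of octic classes; and the removal clauses in (ii), which follow from the kernel computation for the additive polynomials (equivalently from the proof of Lemma~\ref{L1}). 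Your remark that Lemma~\ref{equiv} is vacuous here because all four candidates have derivative $1$ is also right, and consistent with the paper's observation that the question is ``particular to characteristic $p$.''

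The gap you identify is the real one, and it is the entire content of the conjecture: nothing in the paper (or in your outline) shows that an arbitrary degree-$2$ mock automorphism of $\F_4^3$, let alone of $L^3$ for general $L$ of characteristic $2$, is tamely equivalent to some $(P(x),y,z)$. For (i) your proposed exhaustive scan of $\GL_3(\F_4)$-congruence classes of quadratic triples is plausible in size but is not carried out, and as you note, step (c) --- certifying the tame reduction for every surviving class --- is not automatic; for (ii) there is no finite reduction at all. One further caveat: the ``four classes'' in the statement may in fact be three, since separating $x^8+x^4+x$ from $x^8+x^2+x$ is itself open (Question~\ref{Qu1}), so any invariant you build for step (c) cannot be expected to distinguish them. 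In short, your proposal is an accurate and honest assessment of the state of the problem, not a proof; the paper is in exactly the same position.
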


It would be interesting to see a proof of this conjecture by theoretical means - or a counterexample of course.

\subsection{The finite field $\F_5$}

There are $(125-1)(125-5)(125-25)=1,488,000$ linear automorphisms and $125\cdot1,488,000=1,186,000,000$ affine automorphisms.
We consider maps of the following form:
There are $3,625$ mock automorphisms of $\F_5$ of degree at most 2. endomorphisms satisfying the following:
\begin{itemize}
\item $F\in \overline{\ME}_3^2(\F_5)$,
\item $F$ is a mock automorphism of $\F_5$, 
\item $F$ satisfies the  dependency criterion (i.e. $F=(x+H_1(x,y,z), y+H_2(x,y,z),z)$).
\end{itemize}
We counted $3,625$ such maps - and becaus of  Corollary \ref{Jac1aut}, they are all automorphisms.
They all turned out to be tame maps.

\section{Conclusions}

We can gather some of the results in the below theorem:

\begin{theorem} Let $F\in \GA_3^d(\F_q)$. If one of the below conditions is met, then $F$ is tame:
\begin{itemize}
\item $d=3$, $q=2$,
\item $d=2$, $q=3$,
\item $d=2$, $q=4$ or $5$, and $F$ satisfies the Dependency criterion.
\end{itemize}
\end{theorem}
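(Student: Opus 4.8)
The plan is to strip off the affine part and then read each case off the classifications of Section~3. Two preliminary reductions make this work. First, since an automorphism of $\F_q^n$ is a bijection of $\F_q^n$ and, by the criterion recorded after the first Lemma of Section~\ref{S2}, has $\det(\Jac)\in k^*$, every $F\in\GA_3(\F_q)$ is a \emph{mock automorphism}; hence $F$ occurs in whichever list of mock automorphisms of Section~3 is relevant, and it suffices to check that every entry of that list which is an automorphism is tame. Second, by the first Lemma of Section~\ref{S2} we may write $F=\alpha F'$ with $\alpha\in\Aff_3(\F_q)\subseteq\TA_3(\F_q)$ and $F'\in\overline{\GA}_3^d(\F_q)$; since $\TA_3$ is a group, $F$ is tame iff $F'$ is, so we may and do assume $F$ has affine part the identity. (When a Dependency-criterion hypothesis is present, note that the components of the non-linear part of $F$ and of $F'$ span the same $\F_q$-subspace, so the hypothesis passes to $F'$ unchanged.)

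\textbf{Degree $2$, $q\in\{3,4,5\}$.} For $q=3,5$ the characteristic is odd, so by Corollary~\ref{Jac1aut} a degree-$\le2$ endomorphism with constant nonzero Jacobian determinant is already an automorphism; scaling one coordinate by an affine (hence tame) map we may take $\det\Jac(F')=1$. For $q=3$ this puts $F'$ in the finite set $\overline{\GA}_3^2(\F_3)$ exhausted by computer in \S\,\ref{F3}, every member of which is certified tame. For $q=4,5$ the Dependency criterion lets us, by the remark following its definition, conjugate by a linear map (conjugation by this tame map preserves tameness and degree) to isolate a variable and then, after a further linear change of coordinates, reach the normal form $(x+H_1,y+H_2,z)$ of \S\,\ref{F4F5F7}; these are precisely the finite lists compiled there for $\F_4$ and $\F_5$, each of whose members is tame (for $q=5$ this is also forced by Corollary~\ref{Jac1aut}).

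\textbf{$q=2$.} For $d\le2$ the claim is the equality $\GA_3^2(\F_2)=\TA_3^2(\F_2)$ of Theorem~\ref{T1}. For $d=3$, after the reduction above we must show every $F'\in\overline{\GA}_3^3(\F_2)$ is tame; writing $F'=I+H_2+H_3$ with $H_i$ homogeneous of degree $i$, the subcase $H_2=0$ is handled by the classification of \S\,\ref{F2}, where the only elements of the form $I+H_3$ that are genuine automorphisms are the $400$ tame maps of class~1. The subcase $H_2\neq0$ remains.

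\textbf{Main obstacle.} The one case not already reduced to a finite exhaustion in the excerpt is $d=3$, $q=2$ with $H_2\neq0$, since the full set $\overline{\GA}_3^3(\F_2)$ has size of order $2^{48}$. I would attempt an \emph{elimination lemma}: every $F'=I+H_2+H_3\in\overline{\GA}_3^3(\F_2)$ is tamely equivalent to a map $I+\widetilde H_3$ with $\widetilde H_3$ homogeneous of degree $3$ (or to a map of degree $\le2$), proved by composing $F'$ on either side with triangular and linear maps while tracking the degree-$2$ and degree-$3$ leading parts and invoking the cases already settled. The delicate point is that the naive cancellation — composing with $I-H_2$ — is not allowed, because $I+H_2$ need not be invertible, let alone tame, over $\F_2$; one must instead build a genuinely tame quadratic correction and control how it feeds back into the cubic part, and this is where I expect the real work to lie. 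Failing such a lemma, the fallback is to rerun the search of \S\,\ref{F2} allowing a nonzero homogeneous quadratic part, which is larger but still finite. I expect this quadratic-part elimination to be the main obstacle.
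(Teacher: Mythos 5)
Your overall route is the same as the paper's: the theorem appears in the Conclusions with no separate argument, its intended proof being exactly the aggregation you describe --- strip off the (tame) affine part via the decomposition lemma at the start of Section~\ref{S2}, note that an automorphism is in particular a mock automorphism with affine part the identity, and then read tameness off the exhaustive computer classifications of Section~3 (Theorem~\ref{T1} for $q=2$, $d\le 2$; the $2835$ maps of \S\ref{F3} for $q=3$; the dependency-criterion lists of \S\ref{F4F5F7} for $q=4$ and $5$). Your treatment of how the Dependency criterion passes to the affine-reduced map and behaves under linear conjugation is correct, and the detour through Corollary~\ref{Jac1aut} is harmless but unnecessary here, since $F$ is assumed to be an automorphism and one only needs to locate it in the computed lists.

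The one substantive issue is the case $d=3$, $q=2$ with nonzero quadratic part, and you have diagnosed it exactly right. Section~\ref{F2} explicitly restricts the degree-$3$ computation to maps of the form $I+H$ with $H$ homogeneous of degree $3$, and states that the general degree-$\le 3$ search ``was just out of reach.'' The paper supplies no elimination lemma reducing $I+H_2+H_3$ to the homogeneous cubic case, so the first bullet of the theorem is actually only verified for automorphisms with vanishing homogeneous quadratic part (together with the degree-$\le 2$ case covered by Theorem~\ref{T1}). Your proposed quadratic-elimination lemma does not appear in the paper, and your caution about it is well placed: one cannot simply compose with $I-H_2$, since $I+H_2$ need not be invertible over $\F_2$. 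As written, this subcase is a gap in your proof --- but it is equally a gap in the paper's own justification of the statement, not something the paper resolves and you missed. Everything else in your proposal matches what the paper does.
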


This gives rise to the following conjecture:

\begin{conjecture} If $F=I+H\in \GA_n(k)$ where $H$ is homogeneous of degree 2, then $F$ is tame.
\end{conjecture}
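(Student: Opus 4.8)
The plan is to prove the stronger statement that $F$ is linearly conjugate to a triangular automorphism, hence tame by definition. Write $F=I+H$ with $H=(H_1,\dots,H_n)$ homogeneous of degree $2$ and put $J:=\Jac(H)$, so the entries of $J$ are homogeneous of degree $1$. First I would record the automatic reductions. Since $F$ is an automorphism, $\det(\Jac(F))=\det(I+J)\in k^*$ is a nonzero constant, and since $J$ has no constant term this constant is $1$. Comparing homogeneous components: the sum $e_m$ of the principal $m\times m$ minors of $J$ is homogeneous of degree $m$ in $X$, and from
\[ \det(tI+J)=\sum_{m=0}^{n}e_m\,t^{n-m},\qquad \det(I+J)=\sum_{m=0}^{n}e_m=1 \]
we get $e_m=0$ for every $m\ge 1$. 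Hence the characteristic polynomial of $J$ over $k(X)$ is $t^n$, i.e.\ \emph{$J$ is nilpotent}; note this argument is characteristic-free.

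Next write $J(X)=\sum_{j=1}^{n}X_jC_j$ with constant matrices $C_j=\partial J/\partial X_j$; equality of the mixed second partials of each $H_i$ yields the symmetry $(C_j)_{ik}=(C_k)_{ij}$, which is precisely the condition that $J$ be a Jacobian. Nilpotency of $J(X)$ for every $X$ says exactly that the span $W:=\langle C_1,\dots,C_n\rangle\subseteq M_n(k)$ consists of nilpotent matrices. The crux of the whole argument is then the following claim: \emph{there is an $L\in\GL_n(k)$ with $L^{-1}C_jL$ strictly upper triangular for all $j$.} Granting it, $L^{-1}J(LX)L=\sum_i X_i\,L^{-1}\bigl(\sum_j L_{ji}C_j\bigr)L$ is strictly upper triangular for every $X$; so, writing $G:=L^{-1}FL=I+\wt H$, we obtain $\partial\wt H_i/\partial X_j=0$ whenever $j\le i$, hence $\wt H_i$ involves only $X_{i+1},\dots,X_n$ and, being homogeneous of degree $2$, $\wt H_n=0$. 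Thus $G$ is triangular, so $G\in\TA_n(k)$, and therefore $F=LGL^{-1}\in\TA_n(k)$ because $L\in\GL_n(k)\subseteq\TA_n(k)$. As a weaker fallback that already recovers the dimension-$3$ cases checked in this paper: nilpotency gives $\det J=0$, so $\nabla H_1,\dots,\nabla H_n$ are $k(X)$-dependent, and one then tries to upgrade this to $k$-linear dependence of $H_1,\dots,H_n$, i.e.\ to the dependence criterion, after which the remark following its definition isolates one variable and one induces on $n$ over the base ring $k[X_1]$.

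The main obstacle is the triangularization claim. A linear space of nilpotent matrices need not be simultaneously triangularizable in general, so any proof must genuinely exploit the Jacobian symmetry $(C_j)_{ik}=(C_k)_{ij}$ — equivalently, that the bilinear map $(x,y)\mapsto J(x)y$ is symmetric, so that in characteristic $\ne 2$ one is really facing a pencil of symmetric Hessian matrices and must build a common isotropic flag for it. I would attack this through the linear algebra of such pencils; characteristic $2$ needs separate bookkeeping, since there the Hessian no longer determines the quadratic form and Euler's relation degenerates, forcing one to argue directly with $W$ and its symmetry. I expect this to go through for small $n$, in accordance with the computational evidence above, but to require a new idea in general — and one should keep in mind that even a non-triangularizable example would not by itself refute the conjecture, since such an $F$ could still be tame for other reasons.
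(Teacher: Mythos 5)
First, a point of order: the statement you are addressing is posed in the paper as a \emph{conjecture}. The authors give no proof; their only evidence is exhaustive machine verification for $n=3$ over $\F_2,\F_3,\F_4,\F_5$ (and over $\F_4,\F_5$ only for maps satisfying the dependence criterion). So there is no argument in the paper to compare yours against, and a correct proof would settle an open problem rather than reprove a known one.

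Your opening reduction is correct and standard: $\det(I+\Jac H)\in k^*$ forces each homogeneous piece $e_m$ ($m\ge 1$) to vanish, so $\Jac H$ is nilpotent over $k(X)$. From there, however, the proposal is a plan, not a proof. The pivotal claim --- that the constant matrices $C_1,\dots,C_n$ can be simultaneously conjugated to strictly upper triangular form --- is precisely the assertion that $\Jac H$ is \emph{strongly} nilpotent, which (van den Essen--Hubbers; see Chapter 7 of \cite{EFC}) is equivalent to $F$ being linearly triangularizable. Nilpotency does not imply strong nilpotency, you offer no mechanism by which the Hessian symmetry $(C_j)_{ik}=(C_k)_{ij}$ would force it, and linear triangularizability is a strictly stronger property than tameness that is not expected to hold for quadratic homogeneous maps in all dimensions. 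You candidly call this ``the main obstacle''; it is, and it is unresolved. The fallback is no safer: upgrading $k(X)$-dependence of the rows of $\Jac H$ to $k$-linear dependence of $H_1,\dots,H_n$ is the homogeneous dependence problem, which fails in general, and an induction ``over the base ring $k[X_1]$'' would only deliver tameness over $k[X_1]$, not over $k$ --- exactly the gap through which the Nagata map escapes.

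There is also a concrete logical error in the endgame, and it sits exactly where this paper's computations live. In characteristic $2$ one has $\partial(X_j^2)/\partial X_j=0$, so $\Jac H$ can vanish identically for a nonzero quadratic homogeneous $H$: for instance $H=(y^2,\,x^2+z^2,\,x^2)$, which underlies the map $(x+y^2,\,y+x^2+z^2,\,z+x^2)$ appearing in the proof of Theorem \ref{T1} --- a mock automorphism with Jacobian determinant $1$ that is \emph{not} an automorphism. For such $H$ all your $C_j$ are zero, the triangularization claim holds vacuously, and yet the inference ``$\partial\wt H_i/\partial X_j=0$ for $j\le i$, hence $\wt H_i\in k[X_{i+1},\dots,X_n]$, hence $G$ is triangular'' is false, because a vanishing partial does not exclude $X_j^2$ from $\wt H_i$ in characteristic $2$. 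So even granting the unproven claim, the argument does not go through over $\F_2$ or $\F_4$ --- the two fields where the paper's verification required genuine work to separate automorphisms from mock automorphisms --- and the characteristic-$2$ case cannot be dismissed as ``bookkeeping''.
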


This natural conjecture might have been posed before, but we are unaware. This article proves this conjecture for $n=3$ and $k=\F_2,\F_3,\F_4,\F_5$. We expect that for $n=3$ and a generic field a solution is within reach.\\

Unfortunately, the computations did not allow us to go as far as finding some candidate non-tame automorphisms (though the Nagata automorphism is one, however it is of too high degree). However,
one of the interesting conclusions is that the set of {\em classes} (under tame automorphisms) of mock automorphisms  seems to be much smaller than we originally expected: only 4 (perhaps 3) over $\F_2$ up to degree 2, and at most 7 over $\F_2$ of degree 3.
In particular, we are puzzled by the interesting question  
whether the two endomorphisms over $\F_2$  described by $(x^8+x^4+x,y)$ and $(x^8+x^2+x,y)$ are not equivalent, as stated in question \ref{Qu1}.

{\bf Computations:} For computations we used the MAGMA computer algebra program. The reader interested in the routines we refer to chapter 6 of the thesis of the second author, \cite{RoelThesis}. Also, we posess databases usable in MAGMA, which we hope to share in the near future on a website. \\

{\bf Acknowledgements:} The second author would like to thank Joost Berson for some useful discussions.

\end{document}